\renewcommand{\theequation}{\arabic{equation}}
\newtheorem{theorem}{Theorem}[section]
\newtheorem{lemma}[theorem]{Lemma}
\newtheorem{proposition}[theorem]{Proposition}
\newtheorem{corollary}[theorem]{Corollary}
\newenvironment{proof}[1][Proof]{\begin{trivlist}
\item[\hskip \labelsep {\bfseries #1}]}{\end{trivlist}}
\newenvironment{definition}[1][Definition]{\begin{trivlist}
\item[\hskip \labelsep {\bfseries #1}]}{\end{trivlist}}
\newenvironment{example}[1][Example]{\begin{trivlist}
\item[\hskip \labelsep {\bfseries #1}]}{\end{trivlist}}
\newenvironment{remark}[1][Remark]{\begin{trivlist}
\item[\hskip \labelsep {\bfseries #1}]}{\end{trivlist}}
\newcommand{\qed}{\nobreak \ifvmode \relax \else
      \ifdim\lastskip<1.5em \hskip-\lastskip
      \hskip1.5em plus0em minus0.5em \fi \nobreak
      \vrule height0.75em width0.5em depth0.25em\fi}
\begin{document}
\title{A Note on Moments of Limit Log Infinitely Divisible Stochastic Measures of Bacry and Muzy}

\author{Dmitry Ostrovsky}

\date{September 1, 2016}

\maketitle
\noindent
\begin{center}
{\small 195 Idlewood Drive, Stamford, CT 06905, USA; email:
dm\_ostrov@aya.yale.edu} \\
\end{center}

\begin{abstract}
\noindent A multiple integral representation of single and joint moments of the total mass of the limit log-infinitely divisible stochastic measure of Bacry and Muzy [{\it Comm. Math. Phys.} {\bf 236}: 449-475, 2003] is derived. The covariance structure of the total mass of the measure is shown to be logarithmic. 
A generalization of the Selberg integral corresponding to single moments of the limit measure is proposed and shown to satisfy a recurrence relation. 
The joint moments of the limit lognormal measure, classical Selberg integral with $\lambda_1=\lambda_2=0,$ and Morris integral are represented in the form of multiple binomial sums. 
For application, low moments of the limit log-Poisson measure are computed exactly
and low joint moments of the limit lognormal measure are considered  in detail.
\end{abstract}

\noindent {\bf Keywords:} Multifractal stochastic measure, multiplicative chaos, intermittency, Selberg integral, infinite divisibility, L\'evy-Khinchine decomposition, binomial sum, joint moments.

\noindent {\bf Mathematics Subject Classification (2010):} 60E07, 60G57, 05A10, 33F10.

\section{Introduction}
\noindent 
In this note we contribute to the study of limit log-infinitely divisible
(logID for short) random measures (also known as multiplicative chaos or cascades) on the unit interval. This study was initiated by Mandelbrot \cite{secondface}, \cite{Lan} and Bacry \emph{et. al.} \cite{MRW} in the limit lognormal case, extended to the compound Poisson case by Barral and Mandelbrot \cite{Pulses}, and developed in the general infinitely divisible case by Bacry and Muzy \cite{BM1}, \cite{BM}. 
The formal mathematical theory of multiplicative chaos was founded by Kahane
\cite{K2}. The interest in this class of measures derives from their remarkable property of stochastic self-similarity with log-infinitely divisible multipliers. In addition, they are grid-free and stationary so that their moments are exactly multiscaling. Since its formal inception in 2002, this class of multifractal random measures has generated a significant level of interest in mathematical physics, especially in the context of KPZ, cf. \cite{BenSch} and  \cite{RhoVar}. Of all such measures, the limit lognormal measure has enjoyed the greatest amount of attention, in part due to the connection of the moments of its total mass with the classical Selberg integral, cf. \cite{BDM}, \cite{FLDR}, \cite{Me4}, \cite{MeIMRN}, \cite{Me14}, its connection with quantum gravity \cite{DS}, and its conjectured relevance to the Riemann zeta function \cite{YK}, \cite{Me14}. The limit lognormal measure is also intimately related to the logarithmically correlated gaussian free field, which has recently attracted a great deal of interest, cf. \cite{YO} and references therein. We refer the reader to \cite{Me5} for a review of the original construction and to \cite{barralID} for recent extensions. 

In this note we investigate the single and joint moments of the total mass of the general limit logID measure.
Our interest in the moments has to do with the fact that our theory of intermittency expansions allows one to reconstruct the full distribution of the total mass from the moments, cf.
\cite{Me3}, \cite{Me5}. In the limit lognormal case we applied this theory to compute the Mellin transform of the total mass exactly, cf. \cite{Me4}, and then extended it to the joint distribution  of the mass of several intervals, cf. \cite{Me6}. The technique of intermittency expansions requires a closed-form formula for the moments of the total mass. The only known case of such a formula is that of the limit lognormal measure, whose moments are given by the classical Selberg integral with $\lambda_1=\lambda_2=0$ as was first pointed out in \cite{BDM}. The main contribution of this note is to represent both single and joint moments of the total mass of the general limit logID measure by novel multiple integrals that generalize the Selberg integral. While we do not know how to compute them in closed-form for arbitrary moments, we derive a general recurrence relation for them 
in cases of single moments and moments of two subintervals of the unit interval. In the case of joint moments of the limit lognormal measure, 
we show that the corresponding multiple integral can be represented in the form of a multiple binomial sum. We also give novel multiple binomial sum interpretations of the Selberg integral with $\lambda_1=\lambda_2=0$ and the Morris integral, complementing the recent combinatorial study in \cite{Petrov}. For application of our theoretical results, we treat low moments of the limit log-Poisson measure 
and the simplest nontrivial joint moments of the limit lognormal measure. 

Our results are exact except for the proof of Corollary \ref{cov}, which partially relies on a heuristic argument. The proofs of our binomial sum
results are only sketched for brevity. 

The plan of the paper is as follows. In Section 2 we give a brief review of the general limit logID measure and in Section 3 we state and prove our main results. In Section 4 we treat the limit log-Poisson measure and in Section 5 the joint moments of the limit lognormal measure. In Section 6 we give conclusions. 
\section{A review of limit logID measures on the unit interval}
\noindent In this section we will review the limit
log-infinitely divisible (logID) construction following \cite{BM1} and \cite{BM}, except for several notation-related
changes to be explained below. 
The starting point is an infinitely divisible (ID) independently scattered random measure
$P$ on the time-scale plane $\mathbb{H}_+=\{(t, \, l), \, \, l>0\},$
distributed uniformly with respect to the intensity measure $\rho$
(denoted by $\mu$ in \cite{BM})
\begin{equation}
\rho(dt \, dl)=dt\,dl/l^2.
\end{equation}
This means that $P(A)$ is ID for measurable subsets
$A\subset\mathbb{H}_+,$ $P(A)$ and $P(B)$ are independent if $A\bigcap B=\emptyset,$ 
and
\begin{equation}
{\bf{E}} \left[ e^{i q P(A)} \right]=e^{\mu\phi(q)\rho(A)},
\,\,\,q\in\mathbb{R},
\end{equation}
where $\mu>0$ is the intermittency parameter\footnote{What we call
$\mu$ is denoted $\lambda^2$ in \cite{BM}. Also, in \cite{BM}
it is taken to be part of $\phi(q),$ whereas we prefer to
separate the two.} and $\phi(q)$ is the logarithm of the characteristic
function of the underlying ID distribution and is given by the
L\'evy-Khinchine formula
\begin{equation}\label{phi}
\phi(q) = -\frac{iq\sigma^2}{2}  - \frac{q^2\sigma^2}{2} + 
\int\limits_{\mathbb{R}\setminus \{0\}} \Bigl(e^{iq u}-1-iq(e^u-1)
\Bigr) d\mathcal{M}(u).
\end{equation}
It is normalized by $\phi(-i)=0$ so that ${\bf E} \bigl[e^{P(A)}\bigr]=1$
for all measurable subsets $A\subset\mathbb{H}_+.$ The constant $\sigma$ satisfies 
$\sigma^2\geq 0$ and the spectral function $\mathcal{M}(u)$ is
continuous and non-decreasing on $(-\infty, 0)$ and $(0, \infty)$, and satisfies the integrability and limit
conditions $\int_{[-1,1] \setminus \{0\}} u^2
d\mathcal{M}(u)<\infty$ and $\lim\limits_{u\rightarrow\pm\infty}
\mathcal{M}(u)=0.$ We will further assume that $\mathcal{M}(u)$
decays at infinity fast enough so that all the integrals with respect to it
in this and next sections converge, 
which restricts the class of permissible spectral
functions. Next, following \cite{Pulses} and \cite{Schmitt}, Bacry and Muzy \cite{BM} introduce special
conical\footnote{The reader should note that other conical sets
can be used to construct the measure. The other choices, however, lead to somewhat different properties of the limit measure, cf. \cite{barralID} for a particular example.}
sets $\mathcal{A}_{\varepsilon}(u)$ in the time-scale
plane defined by
\begin{equation}
\mathcal{A}_{\varepsilon}(u) = \left\{(t,l) \,\,\Big\vert\,\,
|t-u|\leq\frac{l}{2} \,\,\text{for}\,\,\varepsilon\leq l\leq 1
\,\,\text{and}\,\,|t-u|\leq\frac{1}{2}\,\,\text{for}\,\,l\geq
1\right\}.
\end{equation}
The sets
$\mathcal{A}_{\varepsilon}(u)$ and
$\mathcal{A}_{\varepsilon}(v)$ intersect iff $|u-v|<1.$ 
It is easy to see that 
the intensity measure of intersections satisfies
\begin{equation}\label{rho}
\rho_{\varepsilon}(|u-v|) \triangleq
\rho\left(\mathcal{A}_{\varepsilon}(u)\bigcap\mathcal{A}_{\varepsilon}(v)\right)
= \begin{cases}
\log(1/|u-v|)& \, \text{if $\varepsilon\leq |u-v|\leq 1$}, \\
1+\log(1/\varepsilon)-|u-v|/\varepsilon & \, \text{if
$|u-v|<\varepsilon$},
\end{cases}
\end{equation}
and it is identically zero for $|u-v|>1.$ It is clear that
$u\rightarrow P\left(\mathcal{A}_{\varepsilon}(u)\right)$
is a stationary, ID process such that 
$P\left(\mathcal{A}_{\varepsilon}(u)\right)$ and $P\left(\mathcal{A}_{\varepsilon}(v)\right)$ are dependent iff $|u-v|<1.$ With probability one, the process
$u\rightarrow P\left(\mathcal{A}_{\varepsilon}(u)\right)$ has right-continuous
trajectories with finite left limits.

Given these preliminaries, the limit logID measure $M_{\mu}(dt)$ on the interval $[0,\,1]$ 
associated with $\phi(q)$ at intermittency $\mu$ 
is the zero scale limit $\varepsilon\rightarrow 0$ of
finite scale random measures that are
defined to be the exponential functional of the $u\rightarrow P\left(\mathcal{A}_{\varepsilon}(u)\right)$ process.
\begin{equation}
M_{\mu}(a, b)=\lim\limits_{\varepsilon\rightarrow 0}\int\limits_a^b \exp\Bigl(P\bigl(\mathcal{A}_{\varepsilon}(u)\bigr)\Bigr) \, du.
\end{equation}
The limit exists in the weak a.s. sense
as was formally established in
\cite{BM1} based on \cite{K2} using the normalization of $\phi(q)$ and the property of $P$ of
being independently scattered.
The limit measure has the stationarity property
\begin{equation}\label{station}
M_{\mu}(t,t+\tau) \overset{{\rm in \,law}}{=} M_{\mu}(0,\tau)
\end{equation}
and is 
nondegenerate in the sense of ${\bf E}[M_{\mu}(a, b)]=|b-a|$
under the assumption\footnote{The nondegeneracy condition given in \cite{BM1} is
less stringent than Eq. \eqref{nondeg}, which is however sufficient in most
cases of interest such as those of the limit lognormal, compound
Poisson, etc. processes.} that
\begin{equation}\label{nondeg}
1+i\mu\phi'(-i) = 1-\mu\Bigl(\frac{\sigma^2}{2} + \int\limits_{\mathbb{R}\setminus \{0\}} \bigl(ue^{u}-e^u+1
\bigr) d\mathcal{M}(u)\Bigr)
>0.
\end{equation}
The moments $q>1$ of $M_{\mu}(0, t)$ are finite under the
following necessary and sufficient conditions
\begin{subequations}
\begin{align}
& q-\mu\phi(-iq)>1 \Longrightarrow {\bf E}[M^q_{\mu}(0, t)]<\infty, \label{finmom} \\
& {\bf E}[M^q_{\mu}(0, t)]<\infty \Longrightarrow
q-\mu\phi(-iq)\geq 1.
\end{align}
\end{subequations}
The combination $q-\mu\phi(-iq)$ is known as
the {\it multiscaling spectrum}. We have
\begin{equation}\label{mspectrum}
q-\mu\phi(-iq) = q-\mu\Bigl(\frac{\sigma^2}{2}(q^2-q) + \int\limits_{\mathbb{R}\setminus \{0\}} \bigl(e^{qu}-1-q(e^u-1)
\bigr) d\mathcal{M}(u)\Bigr).
\end{equation}
Its significance has to do with the remarkable stochastic self-similarity property of the limit measure. Given $t<1,$ let $\Omega_t$
denote an ID random variable that is independent of $ M_{\mu}(0, 1)$ such that
\begin{equation}\label{X}
{\bf E}\left[e^{i q \Omega_t}\right] = e^{-\mu\log t\phi(q)}.
\end{equation}
Then, the property of stochastic self-similarity is
\begin{equation}\label{multifractal}
M_{\mu}(0, t) \overset{{\rm in \,law}}{=} t\exp\bigl(\Omega_t\bigr) M_{\mu}(0, 1),
\end{equation}
understood as the equality of random variables in law at fixed
$t<1.$ It now follows from Eqs. \eqref{X} and \eqref{multifractal} that the moments
obey the multiscaling law for $q$ such that ${\bf E}[M_{\mu}^q(0, 1)] <\infty$
\begin{equation}\label{multiscaling}
{\bf E}\bigl[M_{\mu}^q(0, t)\bigr]=
t^{q-\mu\phi(-iq)}{\bf E}\bigl[M_{\mu}^q(0, 1)\bigr], \,\,\,t<1.
\end{equation}
Hence, $q\rightarrow q-\mu\phi(-iq)$ is the multiscaling spectrum of
the limit measure. 

We conclude our review of the limit logID construction with a
fundamental lemma due to \cite{BM1}.
\begin{lemma}[Main lemma]\label{main}
Given $t_1\leq\cdots\leq t_n$ and $q_1,\cdots, q_n,$ the joint
characteristic function of $P\left(\mathcal{A}_{\varepsilon}(t_j)\right),$
$j=1\cdots n,$ is
\begin{equation}\label{mainchar}
{\bf E}\Biggl[\exp\Bigl(i\sum_{j=1}^n q_j P\bigl(\mathcal{A}_{\varepsilon}(t_j)\bigr)\Bigr)\Biggr] = \exp\Bigl(\mu\sum_{p=1}^n
\sum_{k=1}^p \alpha_{p,k} \,\rho_{\varepsilon}(t_p-t_k)\Bigr),
\end{equation}
where $\rho_\varepsilon(u)$ is defined in Eq. \eqref{rho} and
the coefficients $\alpha_{p,k}$ are given in terms of $\phi(q)$ by
\begin{equation}\label{alpha}
\alpha_{p,k} = \phi(r_{k,p}) + \phi(r_{k+1,p-1}) - \phi(r_{k,p-1}) - \phi(r_{k+1,p}),\; 
\end{equation}
and $r_{k, p} = \sum_{m=k}^p q_m$ if $k\leq p$ and zero otherwise. 
\end{lemma}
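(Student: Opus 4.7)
The plan is to exploit the independent scattering of $P$ by decomposing the cones $\mathcal{A}_\varepsilon(t_j)$ into disjoint atoms, one for each contiguous block of indices that covers a given spacetime point. First I will establish the geometric key fact: for the ordered points $t_1\le\cdots\le t_n$, the index set $S(t,l)=\{j:(t,l)\in\mathcal{A}_\varepsilon(t_j)\}$ is always a (possibly empty) contiguous block $[k,p]$. Indeed, $(t,l)\in\mathcal{A}_\varepsilon(u)$ amounts to $u$ lying in the interval $[t-l/2,\,t+l/2]$ (resp.\ $[t-1/2,\,t+1/2]$ for $l\ge 1$), and the monotonicity of the $t_j$ forces $S(t,l)$ to be consecutive. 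The equivalent "endpoint-dominates" formulation is $\bigcap_{j=k}^p\mathcal{A}_\varepsilon(t_j)=\mathcal{A}_\varepsilon(t_k)\cap\mathcal{A}_\varepsilon(t_p)$.

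Setting $B_{k,p}=\{(t,l):S(t,l)=[k,p]\}$ for $1\le k\le p\le n$, the $B_{k,p}$ are pairwise disjoint and satisfy $\mathcal{A}_\varepsilon(t_j)=\bigsqcup_{k\le j\le p}B_{k,p}$. Consequently,
\[
\sum_{j=1}^n q_j\,P\bigl(\mathcal{A}_\varepsilon(t_j)\bigr)=\sum_{1\le k\le p\le n}r_{k,p}\,P(B_{k,p}),
\]
and since $P$ is independently scattered with the law in \eqref{phi},
\[
\mathbf{E}\Bigl[\exp\Bigl(i\sum_{j=1}^n q_j P(\mathcal{A}_\varepsilon(t_j))\Bigr)\Bigr]=\exp\Bigl(\mu\sum_{1\le k\le p\le n}\phi(r_{k,p})\,\rho(B_{k,p})\Bigr).
\]

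Next I will compute $\rho(B_{k,p})$ by inclusion--exclusion on $B_{k,p}=\bigl(\mathcal{A}_\varepsilon(t_k)\cap\mathcal{A}_\varepsilon(t_p)\bigr)\setminus\bigl(\mathcal{A}_\varepsilon(t_{k-1})\cup\mathcal{A}_\varepsilon(t_{p+1})\bigr)$, reducing all triple and quadruple intersections to pairwise ones via the endpoint-dominates identity. With the convention that any $\rho_\varepsilon$-term whose argument contains $t_0$ or $t_{n+1}$ is zero, this gives
\[
\rho(B_{k,p})=\rho_\varepsilon(t_p-t_k)-\rho_\varepsilon(t_p-t_{k-1})-\rho_\varepsilon(t_{p+1}-t_k)+\rho_\varepsilon(t_{p+1}-t_{k-1}).
\]

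The final step is a combinatorial re-summation by the pair $(p,k)$: collecting the coefficient of a given $\rho_\varepsilon(t_p-t_k)$ in the exponent, one sees that it receives contributions from the four neighboring atoms $B_{k,p}$, $B_{k+1,p}$, $B_{k,p-1}$, $B_{k+1,p-1}$ with signs $+,-,-,+$, producing exactly $\phi(r_{k,p})-\phi(r_{k+1,p})-\phi(r_{k,p-1})+\phi(r_{k+1,p-1})=\alpha_{p,k}$ as in \eqref{alpha}. I expect the main obstacle to be the careful bookkeeping at the boundary indices $k=1$ and $p=n$: the vanishing of $\rho_\varepsilon$-terms involving $t_0$ or $t_{n+1}$ must consistently match the convention $r_{k,p}=0$ for $k>p$ together with the normalization $\phi(0)=0$ built into the L\'evy--Khinchine form \eqref{phi}, so that each missing atom on the left corresponds to a $\phi(0)$ term on the right of \eqref{alpha}.
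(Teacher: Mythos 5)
Your proposal is correct, and it is essentially the standard argument: the paper itself states this lemma without proof, attributing it to Bacry and Muzy \cite{BM1}, whose proof proceeds by exactly this decomposition of the cones into disjoint atoms indexed by contiguous blocks, followed by the independently-scattered property and an inclusion--exclusion/re-summation that produces the coefficients $\alpha_{p,k}$. Your key geometric observation (that $\bigcap_{j=k}^{p}\mathcal{A}_{\varepsilon}(t_j)=\mathcal{A}_{\varepsilon}(t_k)\cap\mathcal{A}_{\varepsilon}(t_p)$ for ordered points) and the boundary conventions $r_{k,p}=0$ for $k>p$ together with $\phi(0)=0$ are precisely what make the bookkeeping close, so there is no gap.
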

This lemma is of crucial significance as it is the principal
computational tool in the study of limit logID measures and, in particular,
implies all the known invariances of the $u\rightarrow P\bigl(\mathcal{A}_{\varepsilon}(u)\bigr)$ process, cf. \cite{Me5} for details.

\section{Exact results on moments of the total mass}
\noindent In this section we will derive a multiple integral representation of the moments of the total mass of the general limit logID measure. Let $M_\mu(dt)$ denote the limit logID measure corresponding to some fixed $\phi(q).$ Throughout this section we assume that the intermittency parameter $\mu$ satisfies \eqref{nondeg} and
the order of the moment  $n\in\mathbb{N}$ satisfies \eqref{finmom}. Also, $f(t)$ denotes a generic non-negative test function to be integrated with respect to the limit measure (the reader can assume $f(t)=1$ with little loss of generality).\footnote{By a slight abuse of terminology, we refer to any integral of the form $\int_0^1 f(t)\,M_\mu(dt)$ as the total mass.} 
\begin{theorem}[Single moments]\label{single}
Given $m\in\mathbb{N},$ let $d(m)$ be defined by
\begin{equation}\label{d}
d(m)\triangleq \sigma^2+\int\limits_{\mathbb{R}\setminus \{0\}} e^{(m-1)u}
(e^u-1)^2\,d\mathcal{M}(u).
\end{equation}
Then, the $n$th moment is given by a generalized Selberg integral of dimension $n$
\begin{equation}\label{singlemomformula}
{\bf E}\Bigl[\Bigl(\int\limits_0^1 f(t)\,M_\mu(dt)\Bigr)^n\Bigr] = n!
\int\limits_{0<t_1<\cdots<t_n<1} \prod\limits_{i=1}^n f(t_i)\prod\limits_{k<p}^n |t_p-t_k|^{-\mu\,d(p-k)}
\,dt.
\end{equation}
\end{theorem}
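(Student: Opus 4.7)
My plan is to compute the $n$-th moment of the regularized total mass explicitly via Lemma~\ref{main} and then pass to the limit $\varepsilon\to 0$. Writing $M_{\mu,\varepsilon}(dt)=\exp(P(\mathcal{A}_\varepsilon(t)))\,dt$ for the finite-scale measure, Fubini gives
\[
{\bf E}\Bigl[\Bigl(\int_0^1 f(t)\,M_{\mu,\varepsilon}(dt)\Bigr)^n\Bigr]=\int_{[0,1]^n}\prod_{j=1}^n f(t_j)\,{\bf E}\Bigl[\exp\Bigl(\sum_{j=1}^n P(\mathcal{A}_\varepsilon(t_j))\Bigr)\Bigr]\,dt,
\]
and by symmetry I would restrict the integration to the ordered simplex $\{0<t_1<\cdots<t_n<1\}$ at the cost of a factor $n!$.

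Next I would apply Lemma~\ref{main} with $q_1=\cdots=q_n=-i$, an analytic continuation justified by the decay assumption on $\mathcal{M}$ together with $\phi(-i)=0$. At this specialization $r_{k,p}=-i(p-k+1)$ for $k\le p$ and vanishes otherwise, so for $p>k$
\[
\alpha_{p,k}=\phi(-i(p-k+1))-2\phi(-i(p-k))+\phi(-i(p-k-1))
\]
is the discrete second difference of $m\mapsto\phi(-im)$ at $m=p-k$, while $\alpha_{p,p}=\phi(-i)=0$ since the other three arguments vanish by convention. From \eqref{phi} one has $\phi(-im)=\tfrac{\sigma^2}{2}(m^2-m)+\int(e^{mu}-1-m(e^u-1))\,d\mathcal{M}(u)$; the terms linear in $m$ drop out of the second difference, the quadratic piece contributes $\sigma^2$, and $e^{(m+1)u}-2e^{mu}+e^{(m-1)u}=e^{(m-1)u}(e^u-1)^2$, giving exactly $\alpha_{p,k}=d(p-k)$ in the notation of \eqref{d}. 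Using $\rho_\varepsilon(t_p-t_k)=\log(1/(t_p-t_k))$ as soon as $\varepsilon<t_p-t_k$ produces, on the ordered simplex, the target integrand with $\rho_\varepsilon$ replacing $\log(1/\cdot)$.

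Finally I would let $\varepsilon\to 0$. Since $d\mathcal{M}\ge 0$ we have $d(m)\ge\sigma^2\ge 0$, and $u\mapsto\rho_\varepsilon(u)$ is monotone non-decreasing as $\varepsilon\downarrow 0$ for each fixed $u>0$; therefore the finite-scale integrand increases monotonically to the integrand in \eqref{singlemomformula}, and monotone convergence handles the right-hand side. On the left-hand side, the Bacry--Muzy construction gives $\int f\,dM_{\mu,\varepsilon}\to\int f\,dM_\mu$ almost surely, and under the $n$-th moment condition \eqref{finmom} Kahane-type multiplicative chaos theory strengthens this to convergence in $L^n$, so the two sides of the identity agree in the limit. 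I expect this last upgrade to be the main technical obstacle: a.s.\ weak convergence does not by itself yield convergence of $n$-th moments, and some additional input (a uniform $L^{n+\delta}$ bound from a strict margin in \eqref{finmom}, or the martingale-type structure of the chaos approximation) is needed to close the gap.
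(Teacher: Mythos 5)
Your proposal is correct and follows essentially the same route as the paper: specialize Lemma \ref{main} to $q_j=-i$, identify $\alpha_{p,k}$ as the discrete second difference of $m\mapsto\phi(-im)$ so that $\alpha_{p,k}=d(p-k)$ and $\alpha_{p,p}=\phi(-i)=0$, reduce to the ordered simplex by Fubini and symmetry, and pass to the limit $\varepsilon\to 0$. The one place you diverge is the interchange of limit and integral on the right-hand side: you invoke monotone convergence directly for $\rho_\varepsilon$, which is legitimate since the cones $\mathcal{A}_\varepsilon(u)$ grow as $\varepsilon$ decreases and hence $\rho_\varepsilon(|u-v|)$ is indeed non-decreasing as $\varepsilon\downarrow 0$; the paper instead introduces an auxiliary kernel $\bar\rho_\varepsilon$ (which it asserts is monotone, unlike $\rho_\varepsilon$), applies monotone convergence to the corresponding $\bar g_\varepsilon$, and then dominates $g_\varepsilon$ by a constant multiple of $\lim_\varepsilon\bar g_\varepsilon$. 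More substantively, the paper also proves that the limiting integral is \emph{finite} by estimating the contribution of clustering configurations $|s_j-s_l|<\varepsilon$, which yields an error of order $O(\varepsilon^{\,j-l-\mu\phi(-i(j-l+1))})$ and is controlled precisely by the strict inequality $n-\mu\phi(-in)>1$ of Eq.~\eqref{finmom}; your argument obtains finiteness only by importing it from the probabilistic side. Finally, the issue you flag as the main obstacle --- upgrading a.s.\ convergence of $\int f\,dM_{\mu,\varepsilon}$ to convergence of $n$th moments --- is real but is simply asserted in the paper's first display; your proposed fix (the martingale property of $\varepsilon\mapsto\int f\,dM_{\mu,\varepsilon}$ plus a uniform $L^n$ bound, which with Doob's maximal inequality gives $L^n$ convergence) is the standard way to close it, and the uniform bound is exactly what the paper's finiteness estimate on the integral side supplies.
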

The same type of result can be formulated for the joint moments. For simplicity of notation, we only state it here for two subintervals of the unit interval
but it should be clear that the following result applies to any finite number of
non-overlapping subintervals.
\begin{theorem}[Joint moments]\label{joint} 
Let $I_j=(a_j,b_j),$ $j=1,2$ such that $I_j\subset (0, 1)$ and $b_1\leq a_2.$ 
Then, the joint $(n,m)$ moment is given by a generalized Selberg integral of dimension $n+m$
\begin{align}
{\bf E}\Bigl[\Bigl(\int\limits_{I_1} f_1(t)\,M_\mu(dt)\Bigr)^{n}\Bigl(\int\limits_{I_2} f_2(t)\,M_\mu(dt)\Bigr)^{m}\Bigr] = n!m! \!\!\!\!\!\!\!\!\!\!\!\!\!\!\!\!
\int\limits_{\substack{a_1<t_1<\cdots<t_{n}<b_1 \\a_2<t_{n+1}<\cdots<t_{n+m}<b_2 }} & \prod\limits_{i=1}^{n} f_1(t_i) \prod\limits_{i=n+1}^{n+m} f_2(t_i)\times \nonumber \\ 
&\!\!\!\!\!\!\!\!\!\!\!\times\prod\limits_{k<p}^{n+m}|t_p-t_k|^{-\mu\,d(p-k)}
\,dt.
\end{align}
\end{theorem}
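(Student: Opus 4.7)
The plan is to follow the argument for Theorem~\ref{single} essentially verbatim, exploiting the fact that Lemma~\ref{main} applies to \emph{any} finite monotone configuration of points in the time-scale plane and does not distinguish whether those points come from one interval or several.

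I would first pass to scale $\varepsilon>0$ and write the joint moment of the finite-scale measures $\exp(P(\mathcal{A}_\varepsilon(u)))\,du$ as
\[
\int_{I_1^n\times I_2^m} \prod_{i=1}^n f_1(t_i)\prod_{i=n+1}^{n+m} f_2(t_i)\,\mathbf{E}\!\left[\exp\!\Big(\sum_{j=1}^{n+m} P(\mathcal{A}_\varepsilon(t_j))\Big)\right]dt
\]
by Fubini. Separate symmetry within the $\{t_1,\dots,t_n\}$ and $\{t_{n+1},\dots,t_{n+m}\}$ blocks lets me restrict to the ordered cell $a_1<t_1<\cdots<t_n<b_1$, $a_2<t_{n+1}<\cdots<t_{n+m}<b_2$ with a compensating factor $n!\,m!$. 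The key observation is that the hypothesis $b_1\le a_2$ makes this restriction a single strictly increasing configuration $t_1<\cdots<t_{n+m}$, which is exactly the input to Lemma~\ref{main}.

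Next I would apply Lemma~\ref{main} with $q_j=-i$ for every $j$, so that $r_{k,p}=-i(p-k+1)$ for $k\le p$ and each $\alpha_{p,k}$ collapses to the discrete second difference
\[
\alpha_{p,k}=\phi(-i(p-k+1))-2\phi(-i(p-k))+\phi(-i(p-k-1)).
\]
A short calculation from \eqref{phi}, using $\phi(-i)=\phi(0)=0$, shows that the linear-in-$q$ and constant contributions telescope, leaving $\alpha_{p,k}=\sigma^2+\int e^{(p-k-1)u}(e^u-1)^2\,d\mathcal{M}(u)=d(p-k)$ in agreement with \eqref{d}. This is precisely the computation underlying Theorem~\ref{single}; the kernel is insensitive to how the points are partitioned between the two subintervals.

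Finally I would send $\varepsilon\downarrow 0$. Off the diagonals one has $\rho_\varepsilon(|t_p-t_k|)\to\log(1/|t_p-t_k|)$, so the integrand converges pointwise on the ordered cell to $\prod_i f_1(t_i)\prod_i f_2(t_i)\prod_{k<p}|t_p-t_k|^{-\mu d(p-k)}$. The main obstacle, as in Theorem~\ref{single}, is the analytic interchange of limit and $(n+m)$-fold integral: one needs a dominating bound valid uniformly in $\varepsilon$ near the diagonals. This is handled by the finiteness of moments through order $n+m$ on $(a_1,b_2)$ coming from \eqref{finmom} together with the weak almost sure convergence of the finite-scale measures $M_\mu^\varepsilon$ to $M_\mu$ established in \cite{BM1}, so that a dominated convergence argument identical to the one used for Theorem~\ref{single} yields the stated identity.
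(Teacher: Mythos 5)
Your proposal is correct and matches the paper's approach: the paper proves only Theorem~\ref{single} in detail and explicitly states that Theorem~\ref{joint} follows by a straightforward extension of that argument, which is precisely what you carry out (block symmetry giving the $n!\,m!$ factor, the ordering enabled by $b_1\leq a_2$ so that Lemma~\ref{main} applies with all $q_j=-i$, the second-difference computation $\alpha_{p,k}=d(p-k)$, and the same dominated convergence argument for the $\varepsilon\downarrow 0$ limit).
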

\begin{corollary}[Covariance structure]\label{cov}
Let $0<t<1.$
\begin{equation}\label{covstruc}
{\bf Cov}\Bigl(\log\int\limits_t^{t+\tau}M_\mu(dt), \,\log\int\limits_0^{\tau} M_\mu(dt)\Bigr) = -\mu\log t\Bigl(\sigma^2 + \int\limits_{\mathbb{R}\setminus \{0\}} u^2 
\,d\mathcal{M}(u)\Bigr) + O(\tau).
\end{equation}
\end{corollary}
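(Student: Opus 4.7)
The plan is to decompose the process $P$ at scale $\tau$ and reduce the computation to a two-point covariance of the large-scale field. Since $P$ is independently scattered, for any $\varepsilon<\tau$ we have $P(\mathcal{A}_\varepsilon(u))=P(\mathcal{A}_\tau(u))+P(\mathcal{A}_\varepsilon(u)\setminus\mathcal{A}_\tau(u))$ with the two summands independent. The geometry of the cones forces $\mathcal{A}_\varepsilon(u)\setminus\mathcal{A}_\tau(u)\subset\{(s,l):|s-u|<\tau/2\}$, so for $\tau$ small relative to $t$ the small-scale part over $[0,\tau]$ is independent of its counterpart over $[t,t+\tau]$, and both are independent of the large-scale field $u\mapsto P(\mathcal{A}_\tau(u))$. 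The heuristic step is to replace $P(\mathcal{A}_\tau(u))$ by its value at the left endpoint of each $\tau$-window; granting this yields
$$\log M_\mu(t,t+\tau)\approx P(\mathcal{A}_\tau(t))+\log\Bigl(\lim_{\varepsilon\to 0}\int_t^{t+\tau}e^{P(\mathcal{A}_\varepsilon(u)\setminus\mathcal{A}_\tau(u))}du\Bigr),$$
and the second summand is independent of $P(\mathcal{A}_\tau(0))$ and of the analogous small-scale term over $(0,\tau)$. All cross-covariances involving the small-scale terms therefore vanish, leaving
$${\bf Cov}\bigl(\log M_\mu(0,\tau),\log M_\mu(t,t+\tau)\bigr)={\bf Cov}\bigl(P(\mathcal{A}_\tau(0)),P(\mathcal{A}_\tau(t))\bigr)+O(\tau).$$

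To evaluate the residual two-point covariance I would apply Lemma \ref{main} with $n=2$, $t_1=0$, $t_2=t$. Direct evaluation of \eqref{alpha} yields $\alpha_{1,1}=\phi(q_1)$, $\alpha_{2,2}=\phi(q_2)$, and $\alpha_{2,1}=\phi(q_1+q_2)-\phi(q_1)-\phi(q_2)$, so the logarithm of the joint characteristic function of $P(\mathcal{A}_\tau(0))$ and $P(\mathcal{A}_\tau(t))$ reduces to
$$\mu\bigl[\phi(q_1)+\phi(q_2)\bigr]\rho_\tau(0)+\mu\bigl[\phi(q_1+q_2)-\phi(q_1)-\phi(q_2)\bigr]\rho_\tau(t).$$
Differentiating $\partial_{q_1}\partial_{q_2}$ at the origin identifies the joint second cumulant, giving ${\bf Cov}(P(\mathcal{A}_\tau(0)),P(\mathcal{A}_\tau(t)))=-\mu\phi''(0)\rho_\tau(t)$. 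From \eqref{phi} one reads off $\phi''(0)=-\sigma^2-\int u^2\,d\mathcal{M}(u)$, and from \eqref{rho} $\rho_\tau(t)=\log(1/t)$ for $\tau\leq t\leq 1$; substitution recovers exactly the asserted leading term $-\mu\log t\bigl(\sigma^2+\int u^2\,d\mathcal{M}(u)\bigr)$.

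The main obstacle is quantifying the heuristic step. The large-scale field $P(\mathcal{A}_\tau(\cdot))$ has increments of order one across a window of length $\tau$ rather than of order $\tau$, so the endpoint approximation cannot be justified pointwise. What is really needed is that, after taking $\log$ of the integral, the contribution of these $O(1)$ increments to the \emph{covariance} with the distant interval is $O(\tau)$; this is plausible because the increments depend on $P$ localized near $(0,\tau)$ and are therefore nearly independent of $P(\mathcal{A}_\tau(t))$, but is not obviously deducible from the tools reviewed in Section 2. A fully rigorous route would seem to require either an analytic continuation of the joint moment formula of Theorem \ref{joint} to small real powers $(p,q)$ followed by a $\partial_p\partial_q|_{(0,0)}$ extraction, or a dedicated fine-scale analysis; this is presumably why the author flags the argument as only partially rigorous.
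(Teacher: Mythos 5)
Your derivation is sound at the heuristic level and arrives at the correct leading term, but it follows a genuinely different route from the paper. The paper works entirely at the level of moments: it applies Theorem \ref{joint} to ${\bf E}\bigl[(\int_t^{t+\tau}M_\mu)^n(\int_0^\tau M_\mu)^n\bigr]$ for integer $n$, sandwiches the product of cross-factors between $|t+\tau|^{-\mu\Sigma}$ and $|t-\tau|^{-\mu\Sigma}$ where $\Sigma=\sum_{k\le n<p}d(p-k)$, evaluates $\Sigma$ in closed form, analytically continues the result in $n$ to obtain $\Sigma|_{n=q}=q^2\bigl(\sigma^2+\int u^2\,d\mathcal{M}(u)\bigr)+o(q^2)$, and then extracts the log-covariance via the Taylor identity ${\bf Cov}(A^q,B^q)=q^2\,{\bf Cov}(\log A,\log B)+o(q^2)$. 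The heuristic ingredient there is precisely the continuation of the integer-moment formula to $q\to 0$ --- which is the "fully rigorous route" you propose at the end, so your suggested fix inherits the same gap. Your argument instead works at the level of the field: the decomposition $P(\mathcal{A}_\varepsilon(u))=P(\mathcal{A}_\tau(u))+P(\mathcal{A}_\varepsilon(u)\setminus\mathcal{A}_\tau(u))$ and the independence claims are rigorous consequences of $P$ being independently scattered and the cone geometry, and your cumulant computation via Lemma \ref{main} ($\alpha_{2,1}=\phi(q_1+q_2)-\phi(q_1)-\phi(q_2)$, hence covariance $-\mu\phi''(0)\rho_\tau(t)$ with $\phi''(0)=-\sigma^2-\int u^2\,d\mathcal{M}(u)$) is correct; the heuristic ingredient is the endpoint approximation of the large-scale field and the unproven claim that its effect on the covariance is $O(\tau)$, which you identify honestly. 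What each buys: the paper's route leans on the exact multiple-integral formula and localizes all uncertainty in a single analytic-continuation step, giving rigorous two-sided bounds $-\mu\log(t+\tau)(\cdots)\le{\bf Cov}\le-\mu\log(t-\tau)(\cdots)$ conditional on that step; your route is closer to the original argument of Bacry--Delour--Muzy, makes the logarithmic structure transparent as a two-point cumulant of the cone process, and generalizes immediately to $ {\bf Cov}(\log M_\mu(t_1,t_1+\tau),\log M_\mu(t_2,t_2+\tau))$, but leaves the error control as a fine-scale analysis that the tools of Section 2 do not supply. Given that the paper itself flags this corollary as partially heuristic, your proposal is an acceptable alternative at the same level of rigor.
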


Our last two results in this section have to do with the structure of the integrals in Theorems \ref{single} and \ref{joint}, respectively, and are motivated by the goal of formulating a proper generalization of the Selberg integral for an arbitrary limit logID measure. To this end, we make the following definition.
\begin{definition}[Selberg integral for the limit logID measure]
\begin{equation}\label{Sdef}
S_n\bigl(\lambda, \lambda_1,\lambda_2\bigr) \triangleq \int\limits_{0<t_1<\cdots<t_n<1} \prod\limits_{i=1}^n  t_i^{\lambda_1 d(i)}(1-t_i)^{\lambda_2 d(n-i+1)}\prod\limits_{k<p}^n |t_p-t_k|^{2\lambda\,d(p-k)}
\,dt
\end{equation}
\end{definition}
for generally complex $\lambda,$ $\lambda_1,$ and $\lambda_2.$ 
If $\mathcal{M}(u)=0$ and $\sigma=1,$
this definition clearly recovers the classical Selberg integral, cf. Eq. \eqref{selberg} below, and it coincides with the integral in Theorem \ref{single} if
$\lambda=-\mu/2,$ $\lambda_1=\lambda_2=0,$ \emph{i.e.} formally
\begin{equation}
n! S_n(-\mu/2, 0, 0) = {\bf E}\Bigl[\bigl(M_\mu(0, 1)\bigr)^n\Bigr].
\end{equation}
It has the symmetry
\begin{equation}\label{Isymm}
S_n\bigl(\lambda, \lambda_1,\lambda_2\bigr) = S_n\bigl(\lambda, \lambda_2,\lambda_1\bigr),
\end{equation}
which is verified by changing variables $t'_i = 1-t_{n+1-i}.$ 
In addition, one expects the values of this integral at positive, integer $\lambda$ to determine the values for all $\lambda$ as is well-known to be the case for the Selberg integral.
\begin{theorem}[Recurrence relation: single moments]\label{mylemma}
Let $n=2,3,4\cdots$ and $S_0=1.$ Then,
\begin{align}\label{Iinvar} 
S_n(\lambda, 0, 0) = & 
\frac{1}{\bigl(n-1+2\lambda\phi(-in)\bigr)\bigl(n+2\lambda\phi(-in)\bigr)} \int\limits_{0<t_2<\cdots<t_{n-1}<1} \prod_{i<j}^n |t_i-t_j|^{2\lambda d(j-i)}\Big\vert_{\substack{t_1=0 \\ t_n=1}} \,dt, \\
= & \frac{1}{\bigl(n-1+2\lambda\phi(-in)\bigr)\bigl(n+2\lambda\phi(-in)\bigr)}   
S_{n-2}(\lambda, 2\lambda, 2\lambda),
\end{align}
where we have by Eq. \eqref{phi} for $n\in\mathbb{N}$
\begin{equation}\label{phin}
\phi(-in) = \frac{\sigma^2}{2}(n^2-n) + 
\int\limits_{\mathbb{R}\setminus \{0\}} \Bigl(e^{nu}-1-n(e^u-1)
\Bigr) d\mathcal{M}(u).
\end{equation}
\end{theorem}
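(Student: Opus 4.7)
The plan is to prove both equalities in a single sweep by an affine change of variables in the $n$-fold integral for $S_n(\lambda,0,0)$ that isolates $t_1$ and $t_n$. I would introduce $t_i = t_1 + (t_n - t_1)\,s_{i-1}$ for $i = 2,\ldots,n-1$, taking $(t_1,t_n,s_1,\ldots,s_{n-2})$ as new coordinates over $\{0 < t_1 < t_n < 1\}\times\{0 < s_1 < \cdots < s_{n-2} < 1\}$, with Jacobian $(t_n-t_1)^{n-2}$. Under this substitution every pairwise difference $|t_p - t_k|$ --- including the boundary differences $t_p - t_1$, $t_n - t_k$, and $t_n - t_1$ itself --- factors as $(t_n - t_1)$ times an $s$-only expression (with the conventions $s_0 \equiv 0$, $s_{n-1} \equiv 1$). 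Consequently the full product pulls out a factor $(t_n - t_1)^{2\lambda\sum_{1\leq k<p\leq n} d(p-k)} = (t_n-t_1)^{2\lambda\sum_{m=1}^{n-1}(n-m)\,d(m)}$.

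The key step is to recognise that this exponent equals $2\lambda\phi(-in)$, i.e.\
\[
\sum_{m=1}^{n-1}(n-m)\,d(m) \;=\; \phi(-in).
\]
The Gaussian contribution is immediate since $\sum_{m=1}^{n-1}(n-m) = \binom{n}{2}$, which matches the $\sigma^2$-coefficient in Eq.~\eqref{phin}. For the L\'evy part, substituting $d(m)$ into the left-hand side reduces the claim to the polynomial identity $(x-1)^2 \sum_{j=0}^{n-2}(n-1-j)\,x^j = x^n - 1 - n(x-1)$ in $x = e^u$, which follows, e.g., from the closed form $\sum_{j=0}^{n-2}(n-1-j)\,x^j = (x^n - nx + n - 1)/(x-1)^2$. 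This algebraic identity is the only non-mechanical step of the proof; conceptually it is forced by Eq.~\eqref{multiscaling}, since scaling the outer variable in the $n$-fold integral representation of ${\bf E}[M_\mu(0,t)^n]$ must reproduce the multiscaling exponent $n - \mu\phi(-in)$.

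Combining the two observations, the integrand in the new coordinates is $(t_n - t_1)^{(n-2) + 2\lambda\phi(-in)}$ times exactly the original integrand evaluated at $t_1 = 0$ and $t_n = 1$, now viewed as a function of $s_1,\ldots,s_{n-2}$. Integrating the outer variables yields
\[
\int_0^1\!\!\int_0^{t_n}\!\!(t_n - t_1)^\alpha\,dt_1\,dt_n \;=\; \int_0^1 (1-r)\,r^\alpha\,dr \;=\; \frac{1}{(\alpha+1)(\alpha+2)}
\]
with $\alpha = (n-2) + 2\lambda\phi(-in)$, producing the prefactor $[(n-1+2\lambda\phi(-in))(n+2\lambda\phi(-in))]^{-1}$ and thereby the first displayed equality, since the residual $s$-integrand is the boundary evaluation of the original product. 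For the second equality, the boundary factors from the pairs $(t_1,t_j)$ and $(t_i,t_n)$ become $s_k^{2\lambda d(k)}$ and $(1-s_k)^{2\lambda d(n-1-k)}$ respectively; since $d(n-1-k) = d((n-2)-k+1)$, Definition~\eqref{Sdef} identifies the remaining integral as $S_{n-2}(\lambda,2\lambda,2\lambda)$.
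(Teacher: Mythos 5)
Your proof is correct, and it takes a genuinely different route from the paper. The paper establishes the recurrence by two successive integration-by-parts identities of Almkvist--Zeilberger type,
\[
\sum_{l\neq s}\frac{\partial}{\partial t_l}\Bigl[(t_l-t_s)\prod_{i<j}|t_i-t_j|^{2\lambda d(j-i)}\Bigr]=\Bigl(n-1+2\lambda\sum_{i<j}d(j-i)\Bigr)\prod_{i<j}|t_i-t_j|^{2\lambda d(j-i)},
\]
and its companion with an extra factor $(t_p-t_s)$, applied first with $s=1$ and then with $s=1$, $p=n$, the two prefactors emerging one at a time as all boundary terms but one cancel on the simplex. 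You instead exploit the homogeneity of the integrand directly: the affine substitution $t_i=t_1+(t_n-t_1)s_{i-1}$ factors out $(t_n-t_1)^{(n-2)+2\lambda\phi(-in)}$, and a single Beta-type integral $\int_0^1(1-r)r^{\alpha}\,dr$ delivers both factors of the prefactor at once. Your computations check out: the exponent identity $\sum_{m=1}^{n-1}(n-m)d(m)=\phi(-in)$ is exactly the identity the paper records inside the proof of Theorem \ref{single} (your reduction to $(x-1)^2\sum_{j=0}^{n-2}(n-1-j)x^j=x^n-1-n(x-1)$ is a clean way to verify it), and the identification of the boundary factors $s_k^{2\lambda d(k)}$ and $(1-s_k)^{2\lambda d(n-1-k)}$ with Definition \eqref{Sdef} at $(\lambda,2\lambda,2\lambda)$ in dimension $n-2$ is right, since $(n-2)-k+1=n-1-k$. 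What your approach buys is economy and transparency for the single-moment case; what the paper's approach buys is generality, since the differential identities (essentially the Euler-identity infinitesimal version of your scaling) still apply in Theorem \ref{mylemma2}, where the domain is a product of two fixed disjoint intervals and no global rescaling of the variables is available to pin the endpoints. Both arguments implicitly require $n-1+2\lambda\phi(-in)>0$ (convergence near coincident/boundary points, i.e.\ vanishing of the discarded boundary terms), which is guaranteed by the standing assumption \eqref{finmom} at $\lambda=-\mu/2$.
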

This result shows that the dependence of the integral on $\lambda_1$ and $\lambda_2$ gives a recurrence relation for the full integral. This property
combined with the symmetry in Eq. \eqref{Isymm} and correct behavior for $\mathcal{M}(u)=0$ suggest that Eq. \eqref{Sdef} is the proper definition
of the Selberg integral corresponding to an arbitrary limit logID measure.
We will apply Theorem \ref{mylemma} in the next section to calculate
low moments of the limit log-Poisson measure. 

A similar results holds for the joint moments, except that there is a term for each pair of boundary points. We will formulate it here in the
case of two subintervals of the unit interval as in Theorem \ref{joint}.
\begin{theorem}[Recurrence relation: joint moments]\label{mylemma2}
Let $N=n+m.$
\begin{gather}
\int\limits_{\substack{a_1<t_1<\cdots<t_{n}<b_1 \\a_2<t_{n+1}<\cdots<t_{N}<b_2 }} \prod\limits_{k<p}^{N}|t_p-t_k|^{2\lambda\,d(p-k)}\,dt
= \frac{1}{\bigl(N-1+2\lambda\phi(-iN)\bigr)\bigl(N+2\lambda\phi(-iN)\bigr)} \times \nonumber \\
\times\Biggl[ 
(b_2-a_1)^2\!\!\!\!\!\!\!\!\!\!\!\!\!\!\!\!\!\!\!\!\!\!\!\!\!\int\limits_{\substack{a_1<t_2<\cdots<t_{n}<b_1 \\a_2<t_{n+1}<\cdots<t_{N-1}<b_2 }} \prod\limits_{k<p}^{N}|t_p-t_k|^{2\lambda\,d(p-k)}\Big\vert_{\substack{t_1=a_1 \\ t_N=b_2}}\,dt
+ (b_1-a_1)^2\!\!\!\!\!\!\!\!\!\!\!\!\!\!\!\!\!\!\!\!\!\!\!\!\!\!\!\int\limits_{\substack{a_1<t_2<\cdots<t_{n-1}<b_1 \\a_2<t_{n+1}<\cdots<t_{N}<b_2 }} \prod\limits_{k<p}^{N}|t_p-t_k|^{2\lambda\,d(p-k)}\Big\vert_{\substack{t_1=a_1 \\ t_n=b_1}}\,dt \nonumber \\
- (a_2-a_1)^2 \!\!\!\!\!\!\!\!\!\!\!\!\!\!\!\!\!\!\!\!\!\!\!\!\!\!\!\int\limits_{\substack{a_1<t_2<\cdots<t_{n}<b_1 \\a_2<t_{n+2}<\cdots<t_{N}<b_2 }} \prod\limits_{k<p}^{N}|t_p-t_k|^{2\lambda\,d(p-k)}\Big\vert_{\substack{t_1=a_1 \\ t_{n+1}=a_2}}\,dt - (b_2-b_1)^2 \!\!\!\!\!\!\!\!\!\!\!\!\!\!\!\!\!\!\!\!\!\!\!\!\!\!\!\int\limits_{\substack{a_1<t_1<\cdots<t_{n-1}<b_1 \\a_2<t_{n+1}<\cdots<t_{N-1}<b_2 }} \prod\limits_{k<p}^{N}|t_p-t_k|^{2\lambda\,d(p-k)}\Big\vert_{\substack{t_{n}=b_1 \\ t_{N}=b_2}}\,dt \nonumber \\
+ (b_2-a_2)^2 \!\!\!\!\!\!\!\!\!\!\!\!\!\!\!\!\!\!\!\!\!\!\!\!\!\!\!\int\limits_{\substack{a_1<t_1<\cdots<t_{n}<b_1 \\a_2<t_{n+2}<\cdots<t_{N-1}<b_2 }} \prod\limits_{k<p}^{N}|t_p-t_k|^{2\lambda\,d(p-k)}\Big\vert_{\substack{t_{n+1}=a_2 \\ t_{N}=b_2}}\,dt
+(a_2-b_1)^2 \!\!\!\!\!\!\!\!\!\!\!\!\!\!\!\!\!\!\!\!\!\!\!\!\!\!\! \int\limits_{\substack{a_1<t_1<\cdots<t_{n-1}<b_1 \\a_2<t_{n+2}<\cdots<t_{N}<b_2 }} \prod\limits_{k<p}^{N}|t_p-t_k|^{2\lambda\,d(p-k)}\Big\vert_{\substack{t_n=b_1 \\ t_{n+1}=a_2}}\,dt
\Biggr]. \label{jointrec}
\end{gather}
\end{theorem}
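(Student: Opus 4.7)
I would regard the left-hand side of \eqref{jointrec} as a function $I(a_1,b_1,a_2,b_2)$ of the four endpoints and derive the recurrence from two elementary symmetries of $I$. Translation invariance is immediate from the change of variables $t_i\mapsto t_i+c$ in the integral, since the integrand depends only on differences $t_p-t_k$; differentiating the resulting identity at $c=0$ yields $\sum_i\partial_i I=0$, where $i$ ranges over $\{a_1,b_1,a_2,b_2\}$. The change of variables $t_i\mapsto ct_i$ produces a Jacobian $c^N$ and scales the integrand by $c^\gamma$ with $\gamma=2\lambda\phi(-iN)$, provided the identity $\sum_{j=1}^{N-1}(N-j)\,d(j)=\phi(-iN)$ holds; this identity follows from \eqref{d} and \eqref{phin} upon verifying the elementary polynomial identity $(s-1)^2\sum_{j=1}^{N-1}(N-j)\,s^{j-1}=s^N-Ns+N-1$ (with $s=e^u$) and matching the $\sigma^2$ contributions. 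Euler's theorem then gives $\sum_i x_i\partial_i I=(N+\gamma)I$.

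Iterating this yields the second-order identity
\begin{equation*}
\sum_{i,j}x_ix_j\,\partial_i\partial_j I=(N+\gamma)(N-1+\gamma)\,I,
\end{equation*}
since $\bigl(\sum_i x_i\partial_i\bigr)^2=\sum_{i,j}x_ix_j\,\partial_i\partial_j+\sum_i x_i\partial_i$. Differentiating the translation identity gives $\sum_j\partial_i\partial_j I=0$ for each $i$, so expanding $(x_i-x_j)^2$ collapses the pure-square terms and yields $\sum_{i,j}(x_i-x_j)^2\partial_i\partial_j I=-2\sum_{i,j}x_ix_j\,\partial_i\partial_j I$. Combining these two identities produces
\begin{equation*}
(N+\gamma)(N-1+\gamma)\,I=-\sum_{i<j}(x_i-x_j)^2\,\partial_i\partial_j I,
\end{equation*}
a sum over the six unordered pairs of distinct endpoints.

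Finally, each first partial is a boundary integral in the standard way, namely $\partial_{a_1}I=-I|_{t_1=a_1}$, $\partial_{b_1}I=+I|_{t_n=b_1}$, $\partial_{a_2}I=-I|_{t_{n+1}=a_2}$, $\partial_{b_2}I=+I|_{t_N=b_2}$, so iteration gives $\partial_{x_i}\partial_{x_j}I=\varepsilon_i\varepsilon_j\,J_{ij}$ for $i\neq j$, with $\varepsilon_{a_k}=-1$, $\varepsilon_{b_k}=+1$, and $J_{ij}$ the $(N-2)$-dimensional integral of the integrand with the two coordinates indexed by $i,j$ frozen at the corresponding endpoint values. These are precisely the six doubly-restricted integrals on the right-hand side of \eqref{jointrec}; working through the product of signs pair by pair matches the stated $\pm(x_i-x_j)^2$ coefficients, and dividing through by $(N+\gamma)(N-1+\gamma)$ yields the theorem. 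The same argument specialized to two endpoint parameters recovers Theorem \ref{mylemma}. I expect the main obstacle to be verifying the homogeneity identity $\sum_{j=1}^{N-1}(N-j)d(j)=\phi(-iN)$ and tracking the four sign conventions $\varepsilon_{a_k},\varepsilon_{b_k}$ carefully enough that all six signs in \eqref{jointrec} come out correctly.
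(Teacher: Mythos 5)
Your argument is correct, and it reaches \eqref{jointrec} by a genuinely different route from the paper. The paper's proof exhibits two pointwise divergence identities for the integrand, Eqs.~\eqref{id1} and \eqref{id2} (discovered via the Almkvist--Zeilberger algorithm and verified by direct differentiation), integrates the first over the region with $s=n$ to get three boundary terms, applies the second to each of those, and then reduces the resulting nine boundary terms to the six displayed. You instead work entirely with the dependence of $I$ on the four endpoints: translation invariance gives $\sum_i\partial_iI=0$, homogeneity of degree $N+2\lambda\phi(-iN)$ gives the Euler relation (this rests on the same identity $\sum_{k<p}^{N}d(p-k)=\phi(-iN)$ that the paper uses, and your polynomial verification $(s-1)^2\sum_{j=1}^{N-1}(N-j)s^{j-1}=s^N-Ns+N-1$ is correct), and combining the two yields one doubly-restricted boundary integral per unordered pair of endpoints with signs $\varepsilon_{a_k}=-1$, $\varepsilon_{b_k}=+1$; I checked all six sign products against \eqref{jointrec} and they match. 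The two proofs are close cousins --- the vector fields $\sum_{l\neq s}(t_l-t_s)\partial_{t_l}$ in \eqref{id1}--\eqref{id2} are precisely the infinitesimal dilations whose global form you exploit --- but your packaging is more economical: it explains a priori why there are exactly $\binom{4}{2}=6$ terms (degenerating gracefully to zero when $n=1$ or $m=1$, consistent with the four-term formula for $S_{1,3}$), recovers Theorem~\ref{mylemma} as the two-endpoint specialization with no extra work, and replaces the verification of \eqref{id1}--\eqref{id2} plus the nine-term bookkeeping by the operator identity $\bigl(\sum_ix_i\partial_i\bigr)^2=\sum_{i,j}x_ix_j\partial_i\partial_j+\sum_ix_i\partial_i$. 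The one point you should state explicitly is that differentiating $I$ in the endpoints and forming the restricted integrals requires the exponents $2\lambda d(m)$ to exceed $-1$ so that the integrand extends continuously to the relevant boundary faces; the paper's integration by parts needs the same hypothesis, so nothing is lost.
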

It is clear that this recurrence relation suggests how one should define
the analogue of Eq. \eqref{Sdef} for the joint moments but we will not attempt to write down the formal definition here. We will give 
an application of Theorem \ref{mylemma2} to joint moments of the limit lognormal measure in Section 5.

Before we give the proofs we will illustrate our results with two principal examples.
\begin{example}[Limit lognormal moments]
Let $\sigma=1$ and $\mathcal
{M}(u)=0$ in Eq. \eqref{phi}. Then,
\begin{equation}\label{singleLlog}
{\bf E}\Bigl[\Bigl(\int\limits_0^1 f(t)\,M_\mu(dt)\Bigr)^n\Bigr] = n!
\int\limits_{0<t_1<\cdots<t_n<1} \prod\limits_{i=1}^n f(t_i)\prod\limits_{k<p}^n |t_p-t_k|^{-\mu}
\,dt.
\end{equation}
This formula was first derived in \cite{BDM} for $f(t)=1$ and extended to arbitrary $f(t)$ in \cite{MeIMRN}.
Note that the nondegeneracy condition in Eq. \eqref{nondeg} amounts to
$0<\mu<2$ and that the moments become infinite for $n>2/\mu.$ In the limit lognormal case
Theorem \ref{joint} for $f(t)=1$ and Corollary \ref{cov} are originally 
due to \cite{MRW}.
\end{example}
\begin{example}[Limit Log-Poisson moments]
Let $\sigma=0$ and $d\mathcal{M}(u) = \delta\bigl(u-log(c)\bigr)du$ in Eq. \eqref{phi}, \emph{i.e.}
the underlying distribution is a point mass at $\log(c),$ $c>0,$ $c\neq 1.$
\begin{equation}\label{P}
{\bf E}\Bigl[\Bigl(\int\limits_0^1 f(t)\,M_\mu(dt)\Bigr)^n\Bigr] = n!
\int\limits_{0<t_1<\cdots<t_n<1} \prod\limits_{i=1}^n f(t_i)\prod\limits_{k<p}^n |t_p-t_k|^{-\mu (c-1)^2 c^{p-k-1}}
\,dt.
\end{equation}
We believe that this formula is new. The nondegeneracy condition in Eq. \eqref{nondeg} is
\begin{equation}\label{nondegP}
0<\mu<\frac{1}{c\log(c)-c+1},
\end{equation}
so that the limit log-Poisson measure exists for any such $c$ as $c\log(c)-c+1>0$
for $c>0,$ $c\neq 1.$ The moments are finite for $q>1$ if 
\begin{equation}\label{momentexistP}
q-\mu\bigl(c^q-1-q(c-1)\bigr)>1 \Longrightarrow {\bf E}\Bigl[M_\mu(0,1)^q\Bigr]<\infty,
\end{equation}
cf. Eqs. \eqref{finmom} and \eqref{mspectrum}.
In particular, the moments become eventually infinite if
$c>1$ as they do in the limit lognormal case. On the contrary, if $0<c<1,$
all moments for $q>1$ are finite for sufficiently small $\mu$ such as, for example, 
\begin{equation}
\mu\leq \frac{1}{1-c} \Longrightarrow {\bf E}\Bigl[M_\mu(0,1)^q\Bigr]<\infty.
\end{equation}
\end{example}

We now proceed to give the proofs. While Theorem \ref{single} is formally a special case of Theorem \ref{joint}, the proof of Theorem \ref{joint} is a straightforward extension of that of Theorem \ref{single} so that we restrict ourselves to the latter for simplicity.
\begin{proof}[Proof of Theorem \ref{single}]
The proof is based on the main lemma, cf. Lemma \ref{main}. We need to compute
${\bf E}\Bigl[\exp\Bigl(\sum_{j=1}^n P\bigl(\mathcal{A}_{\varepsilon}(t_j)\bigr)\Bigr)\Bigr]$ given 
$t_1\leq\cdots\leq t_n,$ which corresponds to $q_j=-i$ in Lemma \ref{main}.
We wish to show that the corresponding coefficients $\alpha_{p, k},$ cf. Eq. \eqref{alpha}, satisfy 
\begin{equation}\label{alphad}
\alpha_{p,k} = 
\begin{cases} d(m) & \,\text{if $m\triangleq p-k>0$}, \\
0 & \, \text{if $k=p$},
\end{cases}
\end{equation} 
where $d(m)$ is defined in Eq. \eqref{d} above.
Let $m=p-k>0.$ Then, 
\begin{align}
\alpha_{p, k} & = \phi\bigl(-i(m+1)\bigr)+\phi\bigl(-i(m-1)\bigr)-2\phi\bigl(-im\bigr), \nonumber \\
& = d(m)
\end{align}
by Eq. \eqref{phin}. If $k=p,$ $\alpha_{p, k}=\phi(-i)=0$ by the normalization of $\phi(q).$
The proof is now completed by a simple limiting procedure. Using Fubini's theorem and the symmetry of the integrand,
\begin{align}
{\bf E}\Bigl[\Bigl(\int\limits_0^1 f(s)\,M_\mu(ds)\Bigr)^n\Bigr] & = \lim\limits_{\varepsilon\rightarrow 0} {\bf E}\Bigl[\Bigl( \int\limits_0^1 f(s)\, e^{P(\mathcal{A}_{\varepsilon}(s))}\,ds\Bigr)^n\Bigr], \nonumber \\
& = n! \lim\limits_{\varepsilon\rightarrow 0}\Biggl[\int\limits_{\{s_1<\cdots< s_n\}} \prod_{r=1}^n f(s_r) \,{\bf E}\Bigl[e^{P(\mathcal{A}_{\varepsilon}(s_{1}))+\cdots +P(\mathcal{A}_{\varepsilon}(s_{n}))}\Bigr]ds\Biggr].
\end{align}
Now, recalling Eqs. \eqref{mainchar} and \eqref{alphad}, we can write
\begin{equation}\label{limint}
{\bf E}\Bigl[\Bigl(\int\limits_0^1 f(s)\,M_\mu(ds)\Bigr)^n\Bigr] = n! \lim\limits_{\varepsilon\rightarrow 0}\Biggl[\int\limits_{\{s_1<\cdots< s_n\}} \prod_{r=1}^n f(s_r) \, \exp\Bigl(\mu\sum_{p=1}^n
\sum_{k=1}^{p-1} d(p-k) \,\rho_{\varepsilon}(s_p-s_k)\Bigr)ds\Biggr].
\end{equation}
It remains to interchange the order of the integration and $\varepsilon$ limit, which is done by 
the dominated convergence theorem. It is convenient to introduce the function 
\begin{equation}
\bar{\rho}_{\varepsilon}(|u-v|) 
\triangleq  \begin{cases}
-\log(|u-v|)& \, \text{if $\varepsilon\leq |u-v|\leq 1$}, \\
-\log(\varepsilon) & \, \text{if $|u-v|<\varepsilon$}.
\end{cases}
\end{equation}
The advantage of $\bar{\rho}_{\varepsilon}$ over $\rho_{\varepsilon}$ in Eq. \eqref{rho} is that it is monotone in $\varepsilon.$
It satisfies the inequality
\begin{equation}
\rho_{\varepsilon}(|u-v|) \leq  \bar{\rho}_{\varepsilon}(|u-v|) + 1. 
\end{equation} 
Denote the integrand in Eq. \eqref{limint} by $g_\varepsilon(s)$ and the function that is defined by replacing $\rho_{\varepsilon}$ with
$\bar{\rho}_{\varepsilon}$ in Eq. \eqref{limint} by $\bar{g}_\varepsilon(s).$ As $d(m)\geq 0,$ we have the inequality 
\begin{equation}\label{gbarineq}
g_\varepsilon(s) \leq e^{\mu \sum_{p=1}^n
\sum_{k=1}^{p-1} d(p-k)}\,\bar{g}_\varepsilon(s).
\end{equation}
We will now show that 
\begin{equation}\label{gbarequat}
\lim\limits_{\varepsilon\rightarrow 0} \int\limits_{\{s_1<\cdots< s_n\}}  \bar{g}_\varepsilon(s) ds = 
\int\limits_{\{s_1<\cdots< s_n\}}  \lim\limits_{\varepsilon\rightarrow 0}  \bar{g}_\varepsilon(s) ds < \infty.
\end{equation}
The function $\bar{g}_\varepsilon(s)$ is non-negative and monotone in $\epsilon$ so that the order of the integral and limit can be interchanged
by the monotone convergence theorem. To prove that the limit is finite, we note that $\bar{g}_\varepsilon(s)$  is large only over the regions where $s_p$ and $s_k$ are close. 
We will estimate the contribution of such a region to the whole integral.
Assume $s_l<\cdots <s_j$ are within $\varepsilon$ apart for some $1\leq l<j\leq n.$ The Lebesgue measure of this region
is of the order $O(\varepsilon^{j-l}).$ The value of the integrand is of the order 
\begin{equation}
\bar{g}_\varepsilon(s) =
O\Bigl(\varepsilon^{-\mu\sum\limits_{p=l}^j
\sum\limits_{k=l}^{p-1} d(p-k)}\Bigr).
\end{equation}
It is easy to see from the definition of $d(m)$ in Eq. \eqref{d} that
\begin{equation}
\sum\limits_{j=1}^n\sum\limits_{l=1}^{j-1} d(j-l) = \phi(-in).
\end{equation}
It follows that the order of $\bar{g}_\varepsilon(s)$ over this region is $O\bigl(\varepsilon^{-\mu \phi(-i(j-l+1)}\bigr).$ Hence, the total contribution of
this region to the integral is of the order $O\bigl(\varepsilon^{j-l - \mu \phi(-i(j-l+1))}\bigr).$
Finally, $j-l$ is at most $n-1$ and there are finitely many such regions so that their
total contribution to the integral is of the order 
\begin{equation}
O\Bigl(\varepsilon^{n-1 - \mu \phi(-in)}\Bigr).
\end{equation}
This gives us the estimate
\begin{equation}\label{estimate}
 \int\limits_{\{s_1<\cdots< s_n\}}  \bar{g}_{\varepsilon/2}(s) \,ds -  \int\limits_{\{s_1<\cdots< s_n\}}  \bar{g}_{\varepsilon}(s) \,ds 
 =  O\Bigl(\varepsilon^{n-1 - \mu \phi(-in)}\Bigr).
\end{equation}
It remains to recall the condition in Eq. \eqref{finmom} for the finiteness of the $n$th moment of the total mass, 
\begin{equation}
n-\mu \phi(-in)>1,
\end{equation}
so that the exponent in Eq. \eqref{estimate} is positive, 
hence the limit in Eq. \eqref{gbarequat} is finite.
The dominated convergence theorem and Eqs. \eqref{gbarineq} and \eqref{gbarequat} then imply 
\begin{equation}
\lim\limits_{\varepsilon\rightarrow 0} \int\limits_{\{s_1<\cdots< s_n\}}  g_\varepsilon(s) ds =
\int\limits_{\{s_1<\cdots< s_n\}}  \lim\limits_{\varepsilon\rightarrow 0}  g_\varepsilon(s) ds <\infty.
\end{equation}
It remains to note that the limit
\begin{equation}
\lim\limits_{\varepsilon\rightarrow 0}  g_\varepsilon(s)  
= \prod\limits_{r=1}^n f(s_r)\prod\limits_{k<p}^n |s_p-s_k|^{-\mu\,d(p-k)},
\end{equation}
coincides with the integrand in Eq. \eqref{singlemomformula}. 
\qed
\end{proof}
\begin{proof}[Proof of Corollary \ref{cov}]
We follow the idea of the proof given in \cite{MRW} in the limit lognormal case. The starting point is the identity that holds for positive random variables $A$ and $B$ 
\begin{equation}\label{AB}
{\bf{Cov}} \left(A^q,\,\,B^q\right)=q^2\,{\bf{Cov}} \left(\log A,\,\log B\right)+o(q^2)
\end{equation}
and follows by Taylor expanding $A^q$ and $B^q$ around $q=0.$
We will apply this identity to $\int_t^{t+\tau}M_\mu(dt)$ and  $\int_0^{\tau} M_\mu(dt)$ by computing the covariance on the left-hand side of Eq. \eqref{AB}
for integer $q=n$ by Theorem \ref{joint} and then analytically continuing to $q\rightarrow 0.$ 
Let $\tau<t$ and $t+\tau<1.$ We have by Theorem \ref{joint}
\begin{equation}
{\bf E}\Bigl[\Bigl(\int\limits_t^{t+\tau} M_\mu(dt)\Bigr)^{n}\Bigl(\int\limits_0^\tau M_\mu(dt)\Bigr)^{n}\Bigr] = (n!)^2 
\int\limits_{\substack{0<t_{1}<\cdots<t_{n}<\tau \\ t<t_{n+1}<\cdots<t_{2n}<t+\tau}} \prod\limits_{k<p}^{2n}|t_p-t_k|^{-\mu\,d(p-k)}
\,dt.
\end{equation}
We will now estimate the magnitude of the cross terms. Clearly,
\begin{equation}
|t-\tau|\leq |t_p-t_k| \leq t+\tau
\end{equation}
for $k=1\cdots n$ and $p=n+1\cdots 2n.$ Hence,
\begin{equation}\label{est}
|t+\tau|^{-\mu\sum\limits_{k=1}^n  \sum\limits_{p=n+1}^{2n} d(p-k) }\leq\prod\limits_{\substack{k=1\cdots n\\p=n+1\cdots 2n}}^{2n}|t_p-t_k|^{-\mu\,d(p-k)} \leq |t-\tau|^{-\mu\sum\limits_{k=1}^n  \sum\limits_{p=n+1}^{2n} d(p-k) }.
\end{equation}
The double sum in Eq. \eqref{est} can be computed exactly using Eq. \eqref{d}.
\begin{equation}
\sum\limits_{k=1}^n  \sum\limits_{p=n+1}^{2n} d(p-k) = \sigma^2 n^2 +
\int\limits_{\mathbb{R}\setminus \{0\}}  e^{(n-1)u} \Bigl[\frac{e^u-e^{(n+1)u}}{1-e^u}\Bigr]\Bigl[
\frac{e^{-u}-e^{-u(n+1)}}{1-e^{-u}}\Bigr]
(e^u-1)^2\,d\mathcal{M}(u). 
\end{equation}
We now observe that the expression on the right-hand side of this equation is analytic in $n,$ 
which allows us to continue it to $n=q\rightarrow 0.$ We then obtain in this limit
\begin{equation}
\sum\limits_{k=1}^n  \sum\limits_{p=n+1}^{2n} d(p-k)\Big\vert_{n=q} = q^2\Bigl(\sigma^2 + \int\limits_{\mathbb{R}\setminus \{0\}} u^2 
\,d\mathcal{M}(u)\Bigr)+o(q^2)\;\text{as $q\rightarrow 0$}.
\end{equation}
It follows that we get the estimates as $q\rightarrow 0$
\begin{align}
{\bf E}\Bigl[\Bigl(\int\limits_t^{t+\tau} M_\mu(dt)\Bigr)^{q}\Bigl(\int\limits_0^\tau M_\mu(dt)\Bigr)^{q}\Bigr] \leq & 
{\bf E}\Bigl[\Bigl(\int\limits_t^{t+\tau} M_\mu(dt)\Bigr)^{q}\Bigr] {\bf E}\Bigl[\Bigl(\int\limits_0^\tau M_\mu(dt)\Bigr)^{q}\Bigr] \times \nonumber \\ &\times \Bigl(1-q^2\mu  \log|t-\tau|
\bigl(\sigma^2 + \int\limits_{\mathbb{R}\setminus \{0\}} u^2 
\,d\mathcal{M}(u)\bigr)\Bigr) + o(q^2),
\end{align}
\begin{align}
{\bf E}\Bigl[\Bigl(\int\limits_t^{t+\tau} M_\mu(dt)\Bigr)^{q}\Bigl(\int\limits_0^\tau M_\mu(dt)\Bigr)^{q}\Bigr] \geq & 
{\bf E}\Bigl[\Bigl(\int\limits_t^{t+\tau} M_\mu(dt)\Bigr)^{q}\Bigr] {\bf E}\Bigl[\Bigl(\int\limits_0^\tau M_\mu(dt)\Bigr)^{q}\Bigr] \times \nonumber \\ &\times\Bigl(1-q^2\mu  \log|t+\tau|
\bigl(\sigma^2 + \int\limits_{\mathbb{R}\setminus \{0\}} u^2 
\,d\mathcal{M}(u)\bigr)\Bigr) + o(q^2).
\end{align}
Substituting them into Eq. \eqref{AB} and taking the limit $q\rightarrow 0,$
we obtain
\begin{align}
{\bf Cov}\Bigl(\log\int\limits_t^{t+\tau} M_\mu(dt),\,\log\int\limits_0^\tau M_\mu(dt)\Bigr) & \leq -\mu \log|t-\tau|
\Bigl(\sigma^2 + \int\limits_{\mathbb{R}\setminus \{0\}} u^2 
\,d\mathcal{M}(u)\Bigr), \\
{\bf Cov}\Bigl(\log\int\limits_t^{t+\tau} M_\mu(dt),\,\log\int\limits_0^\tau M_\mu(dt)\Bigr) & \geq -\mu \log|t+\tau|
\Bigl(\sigma^2 + \int\limits_{\mathbb{R}\setminus \{0\}} u^2 
\,d\mathcal{M}(u)\Bigr),
\end{align}
and the result follows.
\qed
\end{proof}
\begin{proof}[Proof of Theorems \ref{mylemma} and \ref{mylemma2}]
We note first that the definition of $S_n(\lambda,\lambda_1,\lambda_2)$ implies the identity
\begin{equation}
S_{n-2}(\lambda, 2\lambda, 2\lambda) =     
\int\limits_{0<t_2<\cdots<t_{n-1}<1} \prod_{i<j}^n |t_i-t_j|^{2\lambda d(j-i)}\Big\vert_{\substack{t_1=0\\ t_n=1}} \,dt.
\end{equation}
The key element of the proof is the following pair of identities\footnote{We first discovered these identities in the special case of the limit log-Poisson measure using the Almkvist-Zeilberger algorithm as implemented in the Maple package MultiAlmkvistZeilberger.} 
\begin{align}
\sum\limits_{l\neq s}^{n} \frac{\partial}{\partial t_l} \Bigl[ (t_l-t_s)\prod\limits_{i<j}^n |t_i-t_j|^{2\lambda d(j-i)} \Bigr] 
 = & \Bigl(n-1+2\lambda\sum\limits_{i<j}^n d(j-i)\Bigr) \prod\limits_{i<j}^n |t_i-t_j|^{2\lambda d(j-i)},  \label{id1} \\
\sum\limits_{l\neq s}^{n} \frac{\partial}{\partial t_l} \Bigl[ (t_l-t_s)(t_p-t_s)\prod\limits_{i<j}^n |t_i-t_j|^{2\lambda d(j-i)} \Bigr]
= & \Bigl(n+2\lambda\sum\limits_{i<j}^n d(j-i)\Bigr) (t_p-t_s)\prod\limits_{i<j}^n |t_i-t_j|^{2\lambda d(j-i)}, \label{id2}
\end{align}
where $p\neq s$ are any two indices from 1 to $n.$
Assuming Eqs. \eqref{id1} and \eqref{id2}, the proof of Theorem \ref{mylemma} is immediate. Indeed, 
as we already noted in the proof of Theorem \ref{single},
the definition of $d(m)$ in Eq. \eqref{d} implies that
\begin{equation}
\sum\limits_{l<j}^n d(j-l) = \phi(-in).
\end{equation}
We now apply Eq. \eqref{id1} with $s=1$ followed by Eq. \eqref{id2} with
$s=1$ and $p=n$ and notice that all boundary terms but one cancel at each step. 
\begin{align}
S_n(\lambda, 0, 0) = & \frac{1}{\bigl(n-1+2\lambda\phi(-in)\bigr)}
\int\limits_{0<t_2<\cdots<t_{n}<1} t_n\prod_{i<j}^n |t_i-t_j|^{2\lambda d(j-i)}\Big\vert_{t_1=0} dt, \\
=& \frac{1}{\bigl(n-1+2\lambda\phi(-in)\bigr)\bigl(n+2\lambda\phi(-in)\bigr)}
\int\limits_{0<t_2<\cdots<t_{n-1}<1} \prod_{i<j}^n |t_i-t_j|^{2\lambda d(j-i)}\Big\vert_{\substack{t_1=0\\ t_n=1}} \,dt.
\end{align}
The proof of Theorem \ref{mylemma2} is quite similar. We first apply Eq. \eqref{id1} with $s=n,$ for example, which gives us three nonzero boundary terms corresponding to the indices 1, $n+1,$ and $N.$ 
We then apply Eq. \eqref{id2} to each of these terms with ($s=1,$ $p=n$),
($s=n+1,$ $p=n$), and ($s=N,$ $p=n$), respectively, resulting in nine nontrivial boundary terms altogether. The result follows after a straightforward algebraic reduction. 
Finally, Eqs. \eqref{id1} and \eqref{id2} are verified by a direct calculation.
It is easy to see by inspection
\begin{equation}
\frac{1}{\prod\limits_{i<j}^n |t_i-t_j|^{2\lambda d(j-i)}}\sum\limits_{l\neq s}^{n} \frac{\partial}{\partial t_l} \Bigl[ (t_l-t_s)\prod\limits_{i<j}^n |t_i-t_j|^{2\lambda d(j-i)} \Bigr] = n-1 + 2\lambda\sum\limits_{l\neq s}^{n}\sum\limits_{j\neq l}^n d(|j-l|) \frac{t_l-t_s}{t_l-t_j}.
\end{equation}
Finally, separate off $j=s$ and sum over the remaining pairs
\begin{align}
\sum\limits_{l\neq s}^{n}\sum\limits_{j\neq l}^n d(|j-l|) \frac{t_l-t_s}{t_l-t_j} = & \sum\limits_{l\neq s}^{n} d(|s-l|) + \sum\limits_{l<j\neq s}^{n} \Bigl( d(j-l) \frac{t_l-t_s}{t_l-t_j} + d(j-l) \frac{t_j-t_s}{t_j-t_l}\Bigr), \nonumber \\
= & \sum\limits_{l<j}^{n} d(j-l).
\end{align} 
The proof of Eq. \eqref{id2} is very similar and will be omitted. 
\qed
\end{proof}

\section{Calculation of low moments of limit log-Poisson measures}
\noindent In this section we will focus our attention on the limit log-Poisson measure to illustrate the general theory with a concrete nontrivial example 
of a limit logID measure that is different from the limit lognormal measure. 
To this end, we will consider the moments of the limit log-Poisson measure
corresponding to $c=2$ and $c=1/2$ in Eq. \eqref{P}. We assume
that Eqs. \eqref{nondegP} and \eqref{momentexistP} are satisfied in this section. Then, we have by Theorem \ref{single}
and Eq. \eqref{P}
\begin{align}
{\bf{E}} \Bigl[\Bigl(M_\mu^{(c=2)}(0,1)\Bigr)^n\Bigr] & =  n!
\int\limits_{0<t_1<\cdots<t_n<1} \prod\limits_{k<p}^n |t_p-t_k|^{-\mu  2^{p-k-1}}
\,dt, \\ 
{\bf{E}} \Bigl[\Bigl(M_\mu^{(c=1/2)}(0,1)\Bigr)^n\Bigr] & = n!
\int\limits_{0<t_1<\cdots<t_n<1} \prod\limits_{k<p}^n |t_p-t_k|^{-\mu  2^{k-p-1}}
\,dt.
\end{align}
Similarly for the joint moments we have by Theorem \ref{joint}
\begin{equation}
{\bf E}\Bigl[\Bigl(M_\mu^{(c=2)}\bigl(0,\frac{1}{2}\bigr)\Bigr)^{n}\Bigl( M_\mu^{(c=2)}\bigl(\frac{1}{2}, 1\bigr)\Bigr)^{m}\Bigr] = n!m! 
\int\limits_{\substack{0<t_1<\cdots<t_{n}<1/2\\1/2<t_{n+1}<\cdots<t_{n+m}<1 }} \prod\limits_{k<p}^{n+m}|t_p-t_k|^{-\mu\,2^{p-k-1}}
\,dt,
\end{equation}
\begin{equation}
{\bf E}\Bigl[\Bigl(M_\mu^{(c=1/2)}\bigl(0,\frac{1}{2}\bigr)\Bigr)^{n}\Bigl( M_\mu^{(c=1/2)}\bigl(\frac{1}{2}, 1\bigr)\Bigr)^{m}\Bigr] = n!m! 
\int\limits_{\substack{0<t_1<\cdots<t_{n}<1/2\\1/2<t_{n+1}<\cdots<t_{n+m}<1 }} \prod\limits_{k<p}^{n+m}|t_p-t_k|^{-\mu\,2^{k-p-1}}
\,dt.
\end{equation}
The result of this section is the computation of the single moments
up to $n=4$ and of the joint moments for $n=1,$ $m=1,2.$ To streamline notation, we will express our results in terms of $\lambda$ as opposed to $\mu$ as follows. Consider two families of integrals
\begin{align}
I_n\bigl(\lambda\bigr) \triangleq & \int\limits_{0<x_1<\cdots<x_n<1} \prod\limits_{i<j}^n |x_i-x_j|^{\lambda 2^{j-i}} dx = S_n^{(c=2)}(\lambda, 0, 0) = {\bf{E}} \Bigl[\Bigl(M_\mu^{(c=2)}(0,1)\Bigr)^n\Bigr]/n!, \\
J_n\bigl(\lambda\bigr) \triangleq & \int\limits_{0<x_1<\cdots<x_n<1}  \prod\limits_{i<j}^n |x_i-x_j|^{\lambda 2^{n-(j-i)}} dx = S_n^{(c=1/2)}(2^n\lambda, 0, 0) = {\bf{E}} \Bigl[\Bigl(M_\mu^{(c=1/2)}(0,1)\Bigr)^n\Bigr]/n!,
\end{align}
corresponding to special cases of Eq. \eqref{Sdef} with $d(m)=2^{m-1}$ ($c=2,$ $\lambda=-\mu/2,$ $\lambda_1=\lambda_2=0$) and $d(m)=2^{-m-1}$ ($c=1/2,$ $\lambda=-2^{-n}\mu/2,$ $\lambda_1=\lambda_2=0$), respectively. 
\begin{proposition}[Single moments]\label{Poissonmom}
\begin{align}
I_2(\lambda) = & J_2(\lambda) = \frac{1}{(1+2\lambda)(2+2\lambda)}, \label{P1}\\
I_3(\lambda) = & \frac{1}{(2+8\lambda)(3+8\lambda)} \frac{\Gamma(1+2\lambda)^2}{\Gamma(2+4\lambda)}, \label{P2}\\
I_4(\lambda) = & 
\frac{1}{(3+22\lambda)(4+22\lambda)}  \frac{\Gamma(1+2\lambda)^2}{\Gamma(2+4\lambda)} \frac{\Gamma(1+2\lambda)\Gamma(2+8\lambda)}{\Gamma(3+10\lambda)}\times \nonumber \\ &\times {}_3 F_2(-4\lambda, \,1+2\lambda, \, 2+8\lambda; \,2+4\lambda,\,3+10\lambda; \,1), \label{P3} \\
J_3(\lambda) = & \frac{1}{(2+10\lambda)(3+10\lambda)} \frac{\Gamma(1+4\lambda)^2}{\Gamma(2+8\lambda)}, \label{P4} \\
J_4(\lambda) = &  \frac{1}{(3+34\lambda)(4+34\lambda)}  \frac{\Gamma(1+8\lambda)^2}{\Gamma(2+16\lambda)}  \frac{\Gamma(1+8\lambda)\Gamma(2+20\lambda)}{\Gamma(3+28\lambda)}\times \nonumber \\
& \times {}_3  F_2(-4\lambda, \,1+8\lambda, \, 2+20\lambda; \,2+16\lambda,\,3+28\lambda; \,1). \label{P5}
\end{align}
\end{proposition}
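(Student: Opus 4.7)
The plan is to apply Theorem \ref{mylemma} to each of $I_n(\lambda)$ and $J_n(\lambda)$ in turn, using Eq. \eqref{phin} to compute the scalar $\phi(-in)$ that appears in the quadratic prefactor. For the point-mass spectrum $d\mathcal{M}(u)=\delta(u-\log c)\,du$ with $\sigma=0$, one reads off $\phi(-in)=c^n-1-n(c-1)$, giving $1,4,11$ for $c=2$ at $n=2,3,4$ and $1/4,\,5/8,\,17/16$ for $c=1/2$. Substituting these values into $\bigl(n-1+2\lambda_*\phi(-in)\bigr)\bigl(n+2\lambda_*\phi(-in)\bigr)$, with $\lambda_*=\lambda$ for $I_n$ and $\lambda_*=2^n\lambda$ for $J_n$, reproduces the quadratic prefactors displayed in \eqref{P1}--\eqref{P5}.

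For $n=2$ one has $S_0=1$ and the recurrence gives \eqref{P1} immediately. For $n=3$ the recurrence reduces matters to the one-dimensional integral $S_1(\lambda_*,2\lambda_*,2\lambda_*)=\int_0^1 t^{2\lambda_* d(1)}(1-t)^{2\lambda_* d(1)}\,dt$, a beta integral equal to $\Gamma(1+2\lambda_* d(1))^2/\Gamma(2+4\lambda_* d(1))$. Since $d(1)=(c-1)^2$, one has $d(1)=1$ for $c=2$ (combined with $\lambda_*=\lambda$) and $d(1)=1/4$ for $c=1/2$ (combined with $\lambda_*=8\lambda$), yielding \eqref{P2} and \eqref{P4} respectively.

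The only substantive case is $n=4$, where the recurrence produces the two-dimensional integral $S_2(\lambda_*,2\lambda_*,2\lambda_*)$ with integrand
\begin{equation*}
t_1^{2\lambda_* d(1)}(1-t_1)^{2\lambda_* d(2)}\,t_2^{2\lambda_* d(2)}(1-t_2)^{2\lambda_* d(1)}(t_2-t_1)^{2\lambda_* d(1)}
\end{equation*}
on $\{0<t_1<t_2<1\}$. My strategy is to rescale $t_1=y\,t_2$ with $y\in(0,1)$, which converts the integrand (including the Jacobian $t_2$) into $y^{2\lambda_* d(1)}(1-y)^{2\lambda_* d(1)}\,t_2^{2\lambda_*(2d(1)+d(2))+1}(1-t_2)^{2\lambda_* d(1)}(1-y t_2)^{2\lambda_* d(2)}$. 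Performing the $t_2$-integration first via Euler's integral representation ${}_2F_1(\alpha,\beta;\gamma;z) = \frac{\Gamma(\gamma)}{\Gamma(\beta)\Gamma(\gamma-\beta)}\int_0^1 t^{\beta-1}(1-t)^{\gamma-\beta-1}(1-zt)^{-\alpha}\,dt$ produces a Gauss hypergeometric function of argument $y$; the subsequent $y$-integration via the identity $\int_0^1 y^{p-1}(1-y)^{q-1}{}_2F_1(\alpha,\beta;\gamma;y)\,dy = \frac{\Gamma(p)\Gamma(q)}{\Gamma(p+q)}{}_3F_2(\alpha,\beta,p;\gamma,p+q;1)$ collapses the result to a single ${}_3F_2(1)$ multiplied by an explicit ratio of gamma functions.

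The main obstacle is purely bookkeeping: one must verify that, once the specific values $(d(1),d(2),\lambda_*)=(1,2,\lambda)$ for $c=2$ and $(1/4,1/8,16\lambda)$ for $c=1/2$ are substituted, the parameters of the resulting ${}_3F_2$ collapse exactly to $(-4\lambda,1+2\lambda,2+8\lambda;\,2+4\lambda,3+10\lambda)$ and $(-4\lambda,1+8\lambda,2+20\lambda;\,2+16\lambda,3+28\lambda)$ respectively, and the accompanying gamma ratio reduces to the product appearing in \eqref{P3} and \eqref{P5}. This is a routine arithmetic check requiring no further hypergeometric transformation; multiplying the result by the quadratic prefactor from step one completes the derivation of \eqref{P3} and \eqref{P5}. \qed
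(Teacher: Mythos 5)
Your proposal is correct and follows essentially the same route as the paper: apply Theorem \ref{mylemma} to drop the dimension by two, read off the quadratic prefactors from $\phi(-in)=c^n-1-n(c-1)$, evaluate the $n=3$ case as a beta integral, and for $n=4$ pass to the same two-dimensional integral $\int_{[0,1]^2}y^{2\lambda_* d(1)}(1-y)^{2\lambda_* d(1)}t_2^{2\lambda_*(2d(1)+d(2))+1}(1-t_2)^{2\lambda_* d(1)}(1-yt_2)^{2\lambda_* d(2)}\,dy\,dt_2$ via the substitution $t_1=yt_2$. The only difference is that where the paper cites Lemma 2 of \cite{Nesterenko} for this last integral, you derive the ${}_3F_2(1)$ evaluation directly from Euler's integral representation and term-by-term integration, which is precisely the content of that lemma; your parameter bookkeeping for both $c=2$ and $c=1/2$ reproduces \eqref{P3} and \eqref{P5} exactly.
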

The corresponding joint moments can now be expressed in terms of the single moments as follows. 
\begin{corollary}[Joint moments]\label{Poissonjoint}
\begin{align}
{\bf E}\Bigl[M_\mu^{(c)}\bigl(0,\frac{1}{2}\bigr)\,M_\mu^{(c)}\bigl(\frac{1}{2}, 1\bigr)\Bigr] = & \frac{1}{2}\bigl(1-2^{\mu(c-1)^2-1}\bigr) {\bf E}\Bigl[\Bigl(M_\mu^{(c)}(0,1)\Bigr)^2\Bigr], \\
{\bf E}\Bigl[M_\mu^{(c)}\bigl(0,\frac{1}{2}\bigr)\Bigl(M_\mu^{(c)}\bigl(\frac{1}{2}, 1\bigr)\Bigr)^2\Bigr] = & \frac{1}{6}\bigl(1-2^{\mu(c^3-3c+2)-2}\bigr){\bf E}\Bigl[\Bigl(M_\mu^{(c)}(0,1)\Bigr)^3\Bigr].
\end{align}
\end{corollary}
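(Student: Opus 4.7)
The plan is to apply Theorem \ref{joint} to express each joint moment as an integral over a restricted domain, compare these with the corresponding single-moment integrals from Eq. \eqref{P}, and decompose the full simplex according to the position of each $t_i$ relative to $1/2$; simple affine rescalings and one reflection symmetry will then extract the desired ratios.

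For the first identity I set $\alpha = \mu(c-1)^2 = \mu d(1)$. Theorem \ref{joint} with $n=m=1$ writes the joint moment as $\int_0^{1/2}\!\int_{1/2}^{1}(t_2-t_1)^{-\alpha}\,dt_1\,dt_2$, while Eq. \eqref{P} with $n=2$ gives ${\bf E}[(M_\mu^{(c)}(0,1))^2] = 2\int_{0<t_1<t_2<1}(t_2-t_1)^{-\alpha}\,dt$. I would split the triangle $\{0<t_1<t_2<1\}$ into the two diagonal pieces $\{0<t_1<t_2<1/2\}$ and $\{1/2<t_1<t_2<1\}$ together with the cross piece $\{0<t_1<1/2<t_2<1\}$, which is precisely the joint-moment region. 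The affine changes of variable $t_i\mapsto t_i/2$ and $t_i\mapsto (1+t_i)/2$ each scale the integrand by $2^{\alpha}$ and carry a Jacobian $2^{-2}$, so each diagonal piece contributes $2^{\alpha-2}$ times the full integral; the cross piece therefore equals $(1-2^{\alpha-1})$ times the full integral, and dividing by $2$ produces the stated factor $\tfrac{1}{2}(1-2^{\mu(c-1)^2-1})$.

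For the second identity I would repeat the same idea in three dimensions on the integrand $F(t_1,t_2,t_3) = (t_2-t_1)^{-\mu d(1)}(t_3-t_1)^{-\mu d(2)}(t_3-t_2)^{-\mu d(1)}$. The simplex $\{0<t_1<t_2<t_3<1\}$ splits into four sub-regions according to how many of the $t_i$'s lie below $1/2$. The two all-below and all-above regions each rescale onto the full simplex and contribute $2^{\mu(2d(1)+d(2))-3}$ times the full third-moment integral. The two remaining cross regions correspond, via Theorem \ref{joint}, to ${\bf E}[(M_\mu^{(c)}(0,1/2))^2 M_\mu^{(c)}(1/2,1)]/2$ and ${\bf E}[M_\mu^{(c)}(0,1/2)(M_\mu^{(c)}(1/2,1))^2]/2$. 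The key observation is that these two cross integrals are equal: the substitution $u_i = 1-t_{4-i}$ interchanges the two regions and leaves $F$ invariant because $F$ depends only on gaps $|t_p-t_k|$ through the index difference $p-k$. Solving the resulting linear relation and using $2d(1)+d(2) = (c-1)^2(c+2) = c^3-3c+2$ yields the advertised identity.

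The main thing to be careful about is the reflection symmetry identifying the two cross regions; it is a combinatorial property of the integrand rather than any symmetry of the log-Poisson spectrum under $c\leftrightarrow 1/c$. Once that is recognized, everything reduces to Fubini, a couple of affine rescalings, and tracking the combinatorial prefactors $n!\,m!$ from Theorem \ref{joint}.
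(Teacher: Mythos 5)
Your argument is correct, and it reaches the result by a route that is genuinely different in execution from the paper's, though closely parallel in spirit. The paper's proof is two lines: it invokes the binomial identity \eqref{sumid} for ${\bf E}\bigl[M_\mu(0,1)^n\bigr]$ together with the multiscaling law \eqref{multiscaling} (plus, implicitly, the reflection invariance needed to equate the two cross terms when $n=3$). What you do instead is reprove both ingredients at the level of the multiple integrals of Theorems \ref{single} and \ref{joint}: your decomposition of the simplex according to the position of each $t_i$ relative to $1/2$ is precisely the integral-level form of \eqref{sumid}, and your affine rescalings $t_i\mapsto t_i/2$, $t_i\mapsto(1+t_i)/2$, which pick up the factor $2^{\mu\phi(-in)-n}$ via $\sum_{k<p}d(p-k)=\phi(-in)$, are the integral-level form of the multiscaling law for integer moments. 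The bookkeeping checks out: $d(1)=(c-1)^2=\phi(-2i)$ gives the factor $\tfrac12\bigl(1-2^{\mu(c-1)^2-1}\bigr)$, and $2d(1)+d(2)=(c-1)^2(c+2)=c^3-3c+2=\phi(-3i)$ gives $\tfrac16\bigl(1-2^{\mu(c^3-3c+2)-2}\bigr)$. The paper's route is shorter and does not require the explicit integral representation (and its multiscaling input extends to non-integer $q$); your route is self-contained and has the merit of making explicit the one point the paper glosses over, namely that the two cross regions in the third-moment case contribute equally --- your observation that this follows from invariance of the integrand under $u_i=1-t_{4-i}$, because the exponents depend only on the index gaps $p-k$ and not on any $c\leftrightarrow 1/c$ symmetry, is exactly the right justification.
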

\begin{proof}[Proof of Proposition \ref{Poissonmom}]
Theorem \ref{mylemma} enables us to reduce the dimension of the corresponding integral by two, resulting in simpler integrals, which can be computed by elementary means. In fact, Eq. \eqref{P1} is a special case of Eq. \eqref{Sdef}. The integral for the 3rd moment is reduced to the product of two beta integrals, resulting in Eq. \eqref{P2}. The integral for the 4th moment 
is
\begin{align}
I_4(\lambda) = &\frac{1}{(3+22\lambda)(4+22\lambda)}  
\int\limits_{0<x_2<x_3<1} x_2^{2\lambda} x_3^{4\lambda} (1-x_2)^{4\lambda}(1-x_3)^{2\lambda} (x_3-x_2)^{2\lambda} dx, \nonumber \\
= & \frac{1}{(3+22\lambda)(4+22\lambda)}  \int\limits_{[0,1]^2}
y^{2\lambda}(1-y)^{2\lambda}z^{8\lambda+1}(1-z)^{2\lambda}(1-yz)^{4\lambda} dydz.
\end{align}
The resulting integral is a special case of Lemma 2 of \cite{Nesterenko}.
The proof for $J_n(\lambda)$ is the same.
\qed
\end{proof} 
\begin{proof}[Proof of Corollary \ref{Poissonjoint}]
This result follows from the general identity
\begin{equation}\label{sumid}
{\bf E}\Bigl[M_\mu(0,1)^n\Bigr] = \sum\limits_{k=0}^n \binom{n}{k} {\bf E}\Bigl[M_\mu\bigl(0,\frac{1}{2}\bigr)^{k}\,M_\mu\bigl(\frac{1}{2}, 1\bigr)^{n-k}\Bigr]
\end{equation}
and the multiscaling law of the limit measure, cf. Eq. \eqref{multiscaling}.
\qed
\end{proof}

\section{Joint limit lognormal moments}
\noindent In this section we will continue our study of joint moments of the limit lognormal measure that we began in \cite{Me6}, where we established
the limit lognormal analogue of Corollary \ref{Poissonjoint}. The corresponding multiple integrals have no known closed-form formula similar to Selberg's, cf. Chapter 4 of \cite{ForresterBook}, 
\begin{equation}\label{selberg}
\int\limits_{[0,1]^N} \prod\limits_{i=1}^N t_i^{\lambda_1}(1-t_i)^{\lambda_2}\prod\limits_{k<p}^N |t_p-t_k|^{2\lambda}
\,dt
 = \prod_{k=0}^{N-1}
\frac{\Gamma(1+(k+1)\lambda)\Gamma(1+\lambda_1+k\lambda)\Gamma(1+\lambda_2+k\lambda)}
{\Gamma(1+\lambda)\Gamma(2+\lambda_1+\lambda_2+(N+k-1)\lambda)},
\end{equation}
which gives the single moments, see Eq. \eqref{singleLlog} ($\lambda=-\mu/2$). 
As we will show in this section, the principal difficulty of computing joint moments
is that they involve non-standard hypergeometric-like integrals that do not appear to have the same type of
remarkable cancellations as those that occur in the Selberg integral. The challenge of evaluating these integrals 
poses the problem of finding an alternative representation for them that further reveals their structure.
With this goal in mind, in this section we will present a combinatorial re-formulation of these integrals. 
The interest in a combinatorial approach to the joint moments is based on the recent success of combinatorial
techniques in the context of the classical Selberg integral, cf. \cite{Petrov}.
The main result of this section is a multiple binomial sum representation of the joint moments. 
In addition, the same method gives a similar representation of the single moments, \emph{i.e.} the 
Selberg integral with $\lambda_1=\lambda_2=0$, and of the Morris integral, which we believe are also new. 
In particular, the comparison of the binomial sum representations of the single and joint moments
reveals the source of cancellations in the Selberg integral that are lacking in the joint moments.


The starting point is the formula for the joint moments, cf. Theorem \ref{joint}.
Let $N=n+m.$
\begin{align}
{\bf E}\Bigl[&\Bigl(M_\mu\bigl(0,\frac{1}{2}\bigr)\Bigr)^{n}\Bigl( M_\mu\bigl(\frac{1}{2}, 1\bigr)\Bigr)^{m}\Bigr] =  \int\limits_{[0, \,1/2]^{n}} 
\int\limits_{[1/2, \,1]^{m}} \prod\limits_{k<l}^{N}
|x_k-x_l|^{-\mu} dx, \\
 = &
2^{N(N-1)\mu/2-N}\int\limits_{[0,1]^n} \prod_{k<l}^{n} |y_k-y_l|^{-\mu} \prod_{\substack{k=1\cdots n \\
l=n+1\cdots N}} |y_k+y_l|^{-\mu} \times\prod_{n+1\leq
k<l}^{N} |y_k-y_l|^{-\mu} dy,
\end{align}
where we changed variables $y_k=1-2x_k,$ $k=1\cdots n,$ and $y_k=2x_k-1,$
$k=n+1\cdots N.$
Hence, we will consider the pair of integrals that are parameterized by $N\in\mathbb{N},$ $N=n+m.$ 
\begin{align}
S_N(\lambda)  = & \int\limits_{[0,1]^N} \prod_{k<l}^{N} |y_k-y_l|^{2\lambda} dy
 = \prod_{j=0}^{N-1}
\frac{\Gamma(1+(j+1)\lambda)\Gamma(1+j\lambda)^2}
{\Gamma(1+\lambda)\Gamma(2+(N+j-1)\lambda)},
\label{s1} \\
S_{n, m}(\lambda) = & \int\limits_{[0,1]^N} \prod_{k<l}^{n} |y_k-y_l|^{2\lambda} \prod_{\substack{k=1\cdots n \\
l=n+1\cdots N}} |y_k+y_l|^{2\lambda} \prod_{n+1\leq
k<l}^{N} |y_k-y_l|^{2\lambda} dy.\label{s2}
\end{align}
The integrals for the joint moments are not known in closed from, except for the relation
\begin{equation}\label{sumL}
2^{\lambda N(N-1)+N} S_N(\lambda) = \sum\limits_{n+m=N} \binom{N}{n} S_{n, m}(\lambda),
\end{equation}
which is a special case of Eq. \eqref{sumid}, that determines $S_{1,1}(\lambda)$ and $S_{1,2}(\lambda).$
For example, the simplest nontrivial joint moments are
$S_{1, 3}(\lambda)$ and $S_{2, 2}(\lambda).$ 
We have by Eq. \eqref{s2}
\begin{align}
S_{1, 3}(\lambda) = & \int\limits_0^1 \int\limits_0^1 \int\limits_0^1 \int\limits_0^1
|x_1+x_2|^{2\lambda} |x_1+x_3|^{2\lambda} |x_1+x_4|^{2\lambda} 
|x_2-x_3|^{2\lambda} |x_2-x_4|^{2\lambda} |x_3-x_4|^{2\lambda}\,dx, \\
S_{2, 2}(\lambda) = & \int\limits_0^1 \int\limits_0^1 \int\limits_0^1 \int\limits_0^1
|x_1-x_2|^{2\lambda} |x_1+x_3|^{2\lambda} |x_1+x_4|^{2\lambda} 
|x_2+x_3|^{2\lambda} |x_2+x_4|^{2\lambda} |x_3-x_4|^{2\lambda}\,dx,
\end{align}
and the relation that follows from Eq. \eqref{sumL}
\begin{equation}
4 S_{1,3}(\lambda)+3 S_{2,2}(\lambda) = \bigl(2^{12\lambda+3}-1\bigr) S_4(\lambda).
\end{equation}
The principle challenge of computing them is the occurrence of `+' signs in the integrands. This is best illustrated by
reducing them to 2-dimensional integrals by means of Theorem \ref{mylemma2}. 
\begin{proposition}[$S_{1,3}$ and $S_{2,2}$ as 2-dimensional integrals]\label{twoD}
\begin{align}
S_{1, 3}(\lambda) = & \frac{1}{4(3\lambda+1)(4\lambda+1)}  \int\limits_0^1 \int\limits_0^1  \Bigl[2\bigl[(1-x)^2 x^2 (1+y)^2 y^2 (x+y)^2 \bigr]^\lambda 
 -  \bigl[(1+x)^2 x^2 (1+y)^2 y^2 (x-y)^2\bigr]^\lambda 
 \nonumber \\  &  + 4^{\lambda+1}
 \bigl[(1-x)^2 (1+x)^2 (1+y)^2 (1-y)^2 (x-y)^2\bigr]^\lambda
 -  \bigl[(1-x)^2 x^2 (1-y)^2 y^2 (x-y)^2\bigr]^\lambda \Bigr]\,dxdy,\label{S13} \\
S_{2, 2}(\lambda) = & \frac{1}{3(3\lambda+1)(4\lambda+1)}  \int\limits_0^1 \int\limits_0^1  \Bigl[ \bigl[(1+x)^2 x^2 (1+y)^2 y^2 (x-y)^2\bigr]^\lambda 
 -  2\bigl[(1-x)^2 x^2 (1+y)^2 y^2 (x+y)^2 \bigr]^\lambda 
 \nonumber \\  &  + 4^{\lambda+1}
 \big[ (1-x)^2 (1+x)^2 (1+y)^2 (1-y)^2 (x+y)^2 \bigr]^\lambda \Bigr]\,dxdy,\label{S22}
\end{align}
\end{proposition}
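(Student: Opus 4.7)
The plan is to apply Theorem \ref{mylemma2} directly to the joint moment integrals over the ordered domain $0<t_1<\cdots<t_n<\tfrac12<t_{n+1}<\cdots<t_N<1$ (with $N=4$), and then carry out the change of variables $y_k=1-2t_k$ for $k\le n$ and $y_k=2t_k-1$ for $k\ge n+1$ used in the definition of $S_{n,m}$. In the limit lognormal case $d(m)=1$ for every $m$, so $\phi(-iN)=\binom{N}{2}$; for $N=4$ this gives $\phi(-i4)=6$ and the common prefactor of \eqref{jointrec} equals $1/[12(3\lambda+1)(4\lambda+1)]$. With $(a_1,b_1,a_2,b_2)=(0,\tfrac12,\tfrac12,1)$, the six boundary coefficients of \eqref{jointrec} are $1,\tfrac14,-\tfrac14,-\tfrac14,\tfrac14,0$; the last vanishes because $a_2=b_1$, and for $S_{1,3}$ the second one is additionally degenerate because $n=1$ forces $t_1$ to coincide with both $a_1$ and $b_1$. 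This leaves four nontrivial boundary terms for $S_{1,3}$ and five for $S_{2,2}$.

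Next, for each surviving two-dimensional boundary integral, the change of variables above sends same-interval factors $|t_p-t_k|$ to $|y_p-y_k|/2$ and cross-interval factors to $(y_p+y_k)/2$; the fixed values $t=0,\tfrac12,\tfrac12,1$ correspond to $y=1,0,0,1$, so pair factors involving fixed endpoints turn into simple expressions $y$, $1\pm y$, or the constant $\tfrac12$ in the two free variables, and the Jacobian for the two free variables is $\tfrac14$. An ordered two-variable integral with both free variables in the same interval equals half of the corresponding unordered integral over $[0,1]^2$ by symmetry of the integrand, whereas an integral with one free variable in each interval is automatically over $[0,1]^2$. Converting the ordered $t$-integral to $S_{n,m}(\lambda)$ requires the factor $n!\,m!\cdot 2^{N+\lambda N(N-1)}=n!\,m!\cdot 16\cdot 2^{12\lambda}$, which combined with $1/[12(3\lambda+1)(4\lambda+1)]$ produces the common constants $1/[4(3\lambda+1)(4\lambda+1)]$ for $S_{1,3}$ and $1/[3(3\lambda+1)(4\lambda+1)]$ for $S_{2,2}$ displayed in \eqref{S13}--\eqref{S22}.

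The last step is to identify which boundary terms produce which integrand shape after relabeling $(x,y)$. For $S_{1,3}$, each of the three same-interval terms (both free variables in $(\tfrac12,1)$) yields one of the three distinct $(x-y)^2$-type integrands, and the unique cross-interval term (fixing $t_2=\tfrac12,t_4=1$) yields the $(x+y)^2$ integrand. For $S_{2,2}$, the two same-interval terms produce identical $(x-y)^2$ integrands that sum to the first integrand of \eqref{S22} with coefficient $1$; the two non-corner cross-interval terms produce identical $(x+y)^2$ integrands after swapping $x\leftrightarrow y$ and combine to the second integrand with coefficient $-2$; and the ``corner'' term (fixing $t_1=0,t_4=1$) produces the third integrand. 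The coefficient $4^{\lambda+1}$ on the corner integrand in both \eqref{S13} and \eqref{S22} arises because the pair factor $|t_4-t_1|^{2\lambda}=1$ does not pick up the $2^{-2\lambda}$ scaling that every other pair factor acquires from the change of variables, together with the fact that the boundary coefficient $(b_2-a_1)^2=1$ is four times larger than the $\pm\tfrac14$ coefficients attached to the other boundary terms. The main obstacle is the computational bookkeeping: careful tracking of six pair factors, Jacobians, symmetrization factors, and boundary coefficients under each of the five substitutions, and the identification of identical integrand shapes across distinct boundary terms.
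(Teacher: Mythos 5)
Your proposal is correct and follows exactly the paper's route: the published proof is just the two-sentence observation that the result is a corollary of Theorem \ref{mylemma2} with $a_1=0$, $b_1=a_2=1/2$, $b_2=1$, $d(m)=1$, $\phi(-i4)=6$, followed by the changes of variables $x\to 1-2x$ and $x\to 2x-1$. Your accounting of the six boundary terms, the degenerate ones, the ordering and Jacobian factors, and the origin of the $4^{\lambda+1}$ on the corner term supplies precisely the ``elementary'' bookkeeping the paper leaves implicit, and it all checks out.
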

\begin{proof}
This is a corollary of Theorem \ref{mylemma2}. In our case $\phi(-in)=n(n-1)2,$
$a_1=0,$ $b_1=a_2=1/2,$ $b_2=1,$ and $d(m)= 1.$ The result follows from Eq. \eqref{jointrec}
by elementary changes of variables $x\rightarrow 1-2x$ and $x\rightarrow 2x-1$
that map $[0, 1/2]$ and $[1/2, 1]$ onto $[0,1],$ respectively.
\qed
\end{proof}
The resulting integrals are of a non-standard hypergeometric form precisely due to the presence of `+' signs and are 
beyond our computational reach. 

We will now restrict ourselves to $\lambda\in\mathbb{N}.$
The rationale for this condition is the well-known fact that the values of $S_N(\lambda)$ at $\lambda\in\mathbb{N}$ determine its values for all $\lambda.$ One expects the same property of $S_{n, m}(\lambda)$ so that we can restrict ourselves to $\lambda\in\mathbb{N}$ without loss of generality. 
On the other hand, this restriction allows us to interpret these integrals in the form of binomial sums and thereby reveal
their combinatorial structure. 

The integrands in Eqs. \eqref{s1} and \eqref{s2} are products over $N(N-1)/2$ pairs of variables $|y_k\pm y_l|^{2\lambda}.$ For each pair $k<l$ let $I_{kl},$
denote an index of summation that runs over $-\lambda\cdots\lambda,$ 
and define $I_{kl}=-I_{lk}$ for $k>l.$ Let
$s_{kl}$ be the indicator function of whether the sign of the $(k, l)$ pair is negative, \emph{i.e.}
\begin{equation}\label{s}
s_{kl} =
\begin{cases}
1& \, \text{if the pair is $|y_k - y_l|^{2\lambda}$}, \\
0 & \, \text{if
the pair is $|y_k + y_l|^{2\lambda}$}.
\end{cases}
\end{equation}
Finally, let $\sum\limits_{\substack{I_{kl}=-\lambda\\k<l=1\cdots N}}^\lambda$ denote the sum over all indices $I_{kl},$ $k<l,$ $k, l=1\cdots N,$ such that $I_{kl}\in[-\lambda,\lambda].$
\begin{proposition}[Binomial sums for single and joint moments]\label{ssums}
\begin{align}
S_N(\lambda) = & (-1)^{\lambda \frac{N(N-1)}{2}} \sum\limits_{\substack{I_{kl}=-\lambda\\k<l=1\cdots N}}^\lambda
(-1)^{\sum\limits_{k<l} I_{kl}}
\prod\limits_{k<l}^N  \binom{2\lambda}{\lambda+I_{kl}}\prod\limits_{k=1}^N
\frac{1}{1+(N-1)\lambda +\sum\limits_{l\neq k} I_{kl}}, \label{selbergsum}\\
S_{n, m}(\lambda) = & (-1)^{\lambda\bigl(n(n-1)+ m(m-1)\bigr)/2} \sum\limits_{\substack{I_{kl}=-\lambda\\k<l=1\cdots N}}^\lambda
(-1)^{\sum\limits_{k<l} s_{kl} I_{kl}}
\prod\limits_{k<l}^N  \binom{2\lambda}{\lambda+I_{kl}}\prod\limits_{k=1}^N
\frac{1}{1+(N-1)\lambda +\sum\limits_{l\neq k} I_{kl}}.\label{snmsum}
\end{align}
\end{proposition}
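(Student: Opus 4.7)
The plan is to apply the binomial theorem to each factor in the integrand, interchange the (finite) sum with the integral, and evaluate the resulting product of one-dimensional monomial integrals. Throughout I assume $\lambda\in\mathbb{N}$ so that the absolute value signs can be dropped up to an overall sign. The key observation is that for each pair $k<l$ the symmetric expansion
\begin{equation*}
(y_k\pm y_l)^{2\lambda}=\sum_{I_{kl}=-\lambda}^{\lambda}(\pm 1)^{\lambda-I_{kl}}\binom{2\lambda}{\lambda+I_{kl}}\,y_k^{\lambda+I_{kl}}\,y_l^{\lambda-I_{kl}}
\end{equation*}
is the natural device: it places the same binomial coefficient on $y_k$ and $y_l$ after the change of index $j=\lambda+I_{kl}$, and isolates the sign contribution of the `$-$' pairs into the factor $(-1)^{\lambda-I_{kl}}$.

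Next I would extend the indices antisymmetrically by $I_{lk}:=-I_{kl}$ for $k<l$, so that each variable $y_k$ collects exponent $\lambda+I_{kl}$ from every pair $(k,l)$ with $l\neq k$ (using $\lambda-I_{lk}=\lambda+I_{kl}$ when $l<k$). Summing over all pairs, the total exponent of $y_k$ becomes $(N-1)\lambda+\sum_{l\neq k}I_{kl}$. Interchanging the finite sum and integral, and then using
\begin{equation*}
\int_0^1 y_k^{(N-1)\lambda+\sum_{l\neq k}I_{kl}}\,dy_k=\frac{1}{1+(N-1)\lambda+\sum_{l\neq k}I_{kl}},
\end{equation*}
produces exactly the inner double product appearing in \eqref{selbergsum} and \eqref{snmsum}. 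The prefactor identity $|y_k-y_l|^{2\lambda}=(-1)^{\lambda}(y_k-y_l)^{2\lambda}\cdot(-1)^{-\lambda}$ for integer $\lambda$ (i.e.\ agreement of the two up to a harmless global sign when the power is even, which it is) justifies dropping the absolute values.

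For the sign bookkeeping, each `$-$' pair contributes $(-1)^{\lambda-I_{kl}}=(-1)^{\lambda}(-1)^{I_{kl}}$ (since $(-1)^{-x}=(-1)^x$ for integers), while each `$+$' pair contributes $1$. In the Selberg case \eqref{s1} every one of the $\binom{N}{2}$ pairs is negative, yielding the global factor $(-1)^{\lambda N(N-1)/2}$ and the phase $(-1)^{\sum_{k<l}I_{kl}}$, matching \eqref{selbergsum}. For the joint moment \eqref{s2} only the $\binom{n}{2}+\binom{m}{2}$ intra-block pairs (those with $s_{kl}=1$) are negative, producing the global factor $(-1)^{\lambda(n(n-1)+m(m-1))/2}$ and the phase $(-1)^{\sum_{k<l}s_{kl}I_{kl}}$, which is exactly \eqref{snmsum}.

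The routine parts are the binomial expansion and the elementary integration. The only real subtlety, and the step I would write out carefully, is the antisymmetric extension of $I_{kl}$ together with the symmetric form of the binomial expansion: this is what makes the exponent of each $y_k$ depend only on the sum $\sum_{l\neq k}I_{kl}$ and produces the clean denominators $1+(N-1)\lambda+\sum_{l\neq k}I_{kl}$ uniformly for all $k$. Once this symmetric bookkeeping is in place, both \eqref{selbergsum} and \eqref{snmsum} drop out in the same stroke, and the comparison of the two formulae visibly identifies the source of the additional non-cancelling phases $(-1)^{\sum_{k<l}s_{kl}I_{kl}}$ in the joint case.
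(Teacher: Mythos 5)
Your proposal is correct and follows essentially the same route as the paper: expand each factor $|y_k\pm y_l|^{2\lambda}$ by the binomial theorem, integrate the resulting monomials termwise, and re-centre the summation indices at $I_{kl}=j-\lambda$ (the paper carries this out explicitly only for $N=3$ and remarks that the general case is pure book-keeping). Your antisymmetric convention $I_{lk}=-I_{kl}$ is precisely the device that makes that book-keeping clean for general $N$, and your sign count over the $\binom{n}{2}+\binom{m}{2}$ intra-block pairs reproduces the stated prefactors exactly.
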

It must be pointed out that the expressions in Eqs. \eqref{selbergsum} and \eqref{snmsum} are remarkably similar. 
The principle difference is in the prefactors $ (-1)^{\sum\limits_{k<l} I_{kl}} $ and $(-1)^{\sum\limits_{k<l} s_{kl} I_{kl}},$
the latter involving fewer indices of summation than the former as $s_{kl}$ can be zero so that the sum in \eqref{snmsum}
has fewer cancellation. We believe that it is this lack of cancellations that gives rise to the additional complexity of
the joint moments.

We mention is passing that we have a similar result for the Morris integral, cf. Chapters 3 and 4 of \cite{ForresterBook},
which describes the total mass of the limit lognormal measure on the circle, cf. \cite{FyoBou} and \cite{Me16}. 
Let $a, b\in \mathbb{N}.$
\begin{align}
M_N(a, b, \lambda) &\triangleq \int\limits_{[-\frac{1}{2},\,\frac{1}{2}]^N} \prod\limits_{l=1}^N  e^{ \pi i \theta_l(a-b)}  |1+e^{ 2\pi i\theta_l}|^{a+b} \,
\prod\limits_{k<l}^N |e^{ 2\pi i \theta_k}-e^{2\pi i\theta_l}|^{2\lambda} \,d\theta, \\ &=\prod\limits_{j=0}^{N-1} \frac{\Gamma(1+a+b+\lambda j)\,\Gamma(1+\lambda (j+1))}{\Gamma(1+a+\lambda j)\,\Gamma(1+b+\lambda j)\,\Gamma(1+\lambda)} \label{morris2}.
\end{align}
\begin{proposition}[Binomial sum for the Morris integral]\label{morrissum}
\begin{equation}
M_N(a, b, \lambda) =  \sum\limits_{\substack{I_{kl}=-\lambda\\k<l=1\cdots N}}^\lambda
(-1)^{\sum\limits_{k<l} I_{kl}}
\prod\limits_{k<l}^N  \binom{2\lambda}{\lambda+I_{kl}}\prod\limits_{k=1}^N
\binom{a+b}{a+\sum\limits_{l\neq k} I_{kl}}.
\end{equation}
\end{proposition}

These results are most easily explained through examples. Let $N=3.$
\begin{align}
S_{1,2}(\lambda)  = & (-1)^\lambda \sum\limits_{I_{12}=-\lambda}^\lambda  \sum\limits_{I_{13}=-\lambda}^\lambda 
 \sum\limits_{I_{23}=-\lambda}^\lambda (-1)^{I_{23}}\binom{2\lambda}{\lambda+I_{12}}\binom{2\lambda}{\lambda+I_{13}}\binom{2\lambda}{\lambda+I_{23}} \times \nonumber \\
& \times \frac{1}{1+2\lambda+I_{12}+I_{13}}\frac{1}{1+2\lambda-I_{12}+I_{23}} \frac{1}{1+2\lambda-I_{13}-I_{23}}, \\
S_{3}(\lambda)  = & (-1)^\lambda \sum\limits_{I_{12}=-\lambda}^\lambda  \sum\limits_{I_{13}=-\lambda}^\lambda 
 \sum\limits_{I_{23}=-\lambda}^\lambda (-1)^{I_{12}+I_{13}+I_{23}}\binom{2\lambda}{\lambda+I_{12}}\binom{2\lambda}{\lambda+I_{13}}\binom{2\lambda}{\lambda+I_{23}} \times \nonumber \\
& \times \frac{1}{1+2\lambda+I_{12}+I_{13}}\frac{1}{1+2\lambda-I_{12}+I_{23}} \frac{1}{1+2\lambda-I_{13}-I_{23}}, \label{ssums3} \\
M_3(a, b, \lambda) = & \sum\limits_{I_{12}=-\lambda}^\lambda  \sum\limits_{I_{13}=-\lambda}^\lambda 
 \sum\limits_{I_{23}=-\lambda}^\lambda (-1)^{I_{12}+I_{13}+I_{23}}\binom{2\lambda}{\lambda+I_{12}}\binom{2\lambda}{\lambda+I_{13}}\binom{2\lambda}{\lambda+I_{23}} 
\times \nonumber \\
& \times \binom{a+b}{a+I_{12}+I_{13}}\binom{a+b}{a-I_{12}+I_{23}} \binom{a+b}{a-I_{13}-I_{23}}. \label{morris3}
\end{align}
One can similarly write down multiple binomial sum expressions for $S_{1,3}(\lambda)$ and
$S_{2,2}(\lambda),$ which involve 6-dimensional sums. 

We will sketch proofs of our summation formulas for $N=3$ only for simplicity
as the general case requires extensive book-keeping that would take us too far afield. 
\begin{proof}[Proof of Propositions  \ref{ssums} and \ref{morrissum}]
Consider $S_{1,2}(\lambda).$ By Eq. \eqref{s2} we have
\begin{equation}
S_{1, 2}(\lambda) = \int\limits_{[0,1]^3} |y_1+y_2|^{2\lambda}  |y_1+y_3|^{2\lambda} |y_2-y_3|^{2\lambda} dy.
\end{equation}
Expanding each of the three factors by the binomial formula and evaluating
the resulting integral, we obtain
\begin{equation}
S_{1, 2}(\lambda) =  \sum\limits_{i=0}^{2\lambda}\sum\limits_{j=0}^{2\lambda} 
 \sum\limits_{k=0}^{2\lambda} (-1)^k \binom{2\lambda}{i}\binom{2\lambda}{j}\binom{2\lambda}{k}  \frac{1}{1+i+j}\frac{1}{1+2\lambda-i+k} \frac{1}{1+4\lambda-j-k.}
\end{equation}
The result follows by re-labeling $I_{12}=i-\lambda,$ $I_{13}=j-\lambda,$ $I_{23}=k-\lambda.$ The proof of Eq. \eqref{ssums3} is exactly the same.
The proof of Eq. \eqref{morris3} is similar. One starts with the well-known constant term
representation of the Morris integral, cf. Chapter 3 of \cite{ForresterBook}, 
\begin{equation}
M_N(a, b, \lambda) \triangleq \text{CT}_{\{t_1\cdots t_N\}} \prod\limits_{j=1}^N 
(1-t_j)^a(1-1/t_j)^b \prod\limits_{k<l}^N \Bigl(1-\frac{t_k}{t_l}\Bigr)^\lambda
\Bigl(1-\frac{t_l}{t_k}\Bigr)^\lambda, \label{CTmorris}
\end{equation}
and applies the binomial formula to each factor. The details are straightforward but more involved
and will be omitted.  \qed
\end{proof}


\section{Conclusions}
We have examined the single and joint moments of the total mass of the general limit logID measure of Bacry and Muzy and represented them in the form of
novel Selberg-like integrals involving the L\'evy Khinchine decomposition
of the underlying infinitely divisible distribution. In particular, our formula for the joint moments implies that the covariance structure of the mass of the measure is logarithmic,
thereby extending to the general case what was long known for the limit lognormal measure. We have derived recurrence relations for the single moment integral and the integral corresponding to joint moments of two subintervals. 
Based on the functional form of the recurrence relation, we have formulated a multiple integral that we believe plays the same role for the general limit logID measure as the classical Selberg integral does for the limit lognormal measure. Like the classical integral, the new integral is parameterized
by $(\lambda, \lambda_1, \lambda_2),$ is symmetric in $(\lambda_1, \lambda_2),$
and coincides with the classical integral if the underlying distribution is gaussian. We have illustrated our results with the special case of the limit log-Poisson measure and calculated low moments of its total mass exactly. In the limit lognormal case, we have represented single and joint moments of the total mass in the form of novel multiple binomial sums, resulting in new interpretations of the Selberg integral with $\lambda_1=\lambda_2=0$
and of the Morris integral as binomial coefficient identities. In particular, the multiple sum formulas reveal the source of cancellations, 
which are present in the Selberg integral and are lacking in the integrals for the joint moments, and which are likely responsible for
the complexity of the joint moments. 

The computation of arbitrary single moments of the total mass of a limit logID measure other than the limit lognormal measure and the computation of the joint moments of the latter remain a challenge. We believe that the emerging
structure is hypergeometric, which can already be seen in the functional form
of low limit log-Poisson moments. We have made an attempt to quantify the structure of joint limit lognormal moments in the simplest nontrivial case of 
${\bf E}\bigl[M_\mu(0,1/2)\bigl( M_\mu(1/2,1)\bigr)^{3}\bigr]$
and ${\bf E}\bigl[\bigl(M_\mu(0,1/2)\bigr)^{2}\bigl( M_\mu(1/2,1)\bigr)^{2}\bigr]$
by giving a new representation for these 4-dimensional integrals
in the form of a linear combination of 2-dimensional integrals involving non-standard hypergeometric-like integrands.
The structure of the resulting integrals 
appears to be deep and is left to further research.


\renewcommand{\theequation}{A-\arabic{equation}}

\section*{Acknowledgements}
The author wishes to thank Jakob Ablinger of the Research Institute for Symbolic Computation for verifying Eqs. \eqref{P1} -- \eqref{P5} and attempting to compute the integrals in Eqs. \eqref{S13} and \eqref{S22} using an extension of the Almkvist-Zeilberger algorithm in his Mathematica package MultiIntegrate. The author gratefully acknowledges that he used Doron Zeilberger's implementation of the same algorithm in the Maple package MultiAlmkvistZeilberger. We verified all of our results in Sections 4 and 5 numerically using the computer algebra program MAXIMA (http://maxima.sourceforge.net). Finally, the author is thankful to the referee for several helpful suggestions.

\end{document}